\numberwithin{equation}{section}
\newtheorem{theorem}{Theorem}[section]
\newtheorem{lemma}[theorem]{Lemma}
\newtheorem{proposition}{Proposition}[section]
\newtheorem{example}{Example}[section]
\theoremstyle{remark}
\newtheorem*{remark}{Remark}
\theoremstyle{definition}
\newcommand{\mytodo}[2][]{{%
 \let\marginpar\marginnote
 \reversemarginpar
 \renewcommand{\baselinestretch}{0.8}%
 \todo[#1]{#2}}}
\title{Hahn polynomials and the Burnside process}
\author{Persi Diaconis\thanks{Department of Statistics and Mathematics, Stanford University} \thanks{Research supported by NSF grant DMS 1954042} \and Chenyang Zhong\thanks{Department of Statistics, Stanford University} $^\dagger$}
\date{}
\begin{document}
\maketitle

\begin{abstract}
We study a natural Markov chain on $\{0,1,\cdots,n\}$ with eigenvectors the Hahn polynomials. This explicit diagonalization makes it possible to get sharp rates of convergence to stationarity. The process, the Burnside process, is a special case of the celebrated `Swendsen-Wang' or `data augmentation' algorithm. The description involves the beta-binomial distribution and Mallows model on permutations. It introduces a useful generalization of the Burnside process.

\emph{Dedicated to the memory of Richard Askey}
\end{abstract}

\section{Introduction}\label{Sect.1}
Over the past 100 years, physicists, geneticists, statisticians and probabilists have found positive symmetric operators (Markov chains) with orthogonal polynomial eigenfunctions. This allows precise asymptotic analysis of high powers of the operator. Orthogonal polynomials come in `families' and an explicit diagonalization suggests `deforming' the operator so that the full family of eigenfunctions appears. These deformations can turn out to be useful and natural. This paper gives an example with these characteristics. 

We start off with a high level description of our main results. This uses the language of Markov chains and the Burnside process. A tutorial containing background on these topics is in Sections \ref{Sect.2.1} and \ref{Sect.2.2}. See also Section \ref{Sect.4} for more details.

\subsection{The Burnside process}\label{Sect.1.1}
Let $\mathcal{X}$ be a finite set. Let $G$ be a finite group acting on $\mathcal{X}$. This splits $\mathcal{X}$ into disjoint orbits. Let $O_x=\{x^g:g\in G\}$ be the orbit containing $x$. The Burnside process gives a practical way to choose an orbit uniformly at random. This captures a familiar problem: enumerating \underline{unlabeled} objects. For example, there are $n^{n-2}$ labeled trees on $n$ vertices. There is no formula for the number of unlabeled trees. Indeed, there is an emerging literature for choosing a random spanning tree of a graph.
As a second example, let $\mathcal{X}=G$ with $G$ acting on itself by conjugation. Now the orbits are conjugacy classes of $G$. For instance, if $G=S_n$, the symmetric group, the conjugacy classes are indexed by partitions of $n$ and the Burnside algorithm gives a novel way to choose a random partition.

Let
\begin{equation*}
    \mathcal{X}_g=\{y\in\mathcal{X}: y^g=y\}, G_x=\{g\in G: x^g=x\}.
\end{equation*}
The Burnside process is a Markov chain on $\mathcal{X}$:
\begin{itemize}
    \item From $x$, choose $g\in G_x$ uniformly at random
    \item From $g$, choose $y\in \mathcal{X}_g$ uniformly at random
\end{itemize}
One step of the chain goes from $x$ to $y$. The transition matrix of this chain is
\begin{equation*}
    K(x,y)=\sum_{g\in G_x\cap G_y}\frac{1}{|G_x|}\frac{1}{|\mathcal{X}_g|},\text{ an }|\mathcal{X}|\times |\mathcal{X}|\text{ matrix}.
\end{equation*}
It is easy to see that $K$ has stationary distribution 
\begin{equation*}
    \pi(x)=\frac{z^{-1}}{|O_x|}\quad (z=\text{number of orbits}).
\end{equation*}
This means, if the chain is run from any starting state $x_0$, after `a long time' the orbit containing the current state is close to uniformly distributed.

In this generality, useful quantitative analysis of the Burnside process is an open problem. Even for the symmetric group.

We study a special case where analysis can be pushed through.

Let
\begin{itemize}
   \item $\mathcal{X}=C_2^n$ --- the binary $n$-tuples
    \item $G=S_n$ acting by permuting coordinates
\end{itemize}
If $x\in\mathcal{X}$ has $j$ ones (denoted $|x|=j$), the orbit
\begin{equation}
    O_x=\{y\in\mathcal{X}: |y|=j\}.
\end{equation}
Here $0\leq |x|\leq n$ and the Burnside process gives a complicated way of choosing an element uniformly from $\{0,1,\cdots,n\}$. It is instructive to see how the general algorithm specializes: 
\begin{itemize}
    \item For $x\in\mathcal{X}$, the permutations fixing $x$ form the subgroup $S_j\times S_{n-j}$ (for $j=|x|$) permuting the ones among themselves and the zeros among themselves. It is easy to choose $\sigma\in G_x$ at random.
    \item For $\sigma\in S_n$, write $\sigma$ as a product of cycles. Label the entries of each cycle, randomly, by a zero or a one ($2^{c(\sigma)}$ choices if $\sigma$ has $c(\sigma)$ cycles). Installing this zero\slash one pattern gives $y$. It is easy to choose $y$ given $\sigma$.
\end{itemize}
A first result of this paper shows that, for this example, a finite number of steps are necessary and sufficient for convergence no matter how large $n$ is. To say this carefully, define
\begin{equation*}
    \|K_x^l-\pi\|=\frac{1}{2}\sum_{y\in\mathcal{X}}|K^l(x,y)-\pi(y)|.
\end{equation*}

\begin{theorem}\label{Theorem1}
For $\mathcal{X}=C_2^n$, $G=S_n$, the Burnside process defined above satisfies, for all $n\geq 2$, starting state $x_0=(1,1,\cdots,1)$ and all $l\in\{1,2,3,\cdots\}$,
\begin{equation}
    \frac{1}{4}(\frac{1}{4})^l\leq \|K_{x_0}^l-\pi\|\leq 4(\frac{1}{4})^l.
\end{equation}
\end{theorem}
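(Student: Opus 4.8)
The plan is to project to a one-dimensional chain and diagonalise it via Hahn polynomials. Since $x_0=(1,\dots,1)$ is fixed by all of $S_n$ and $K$ commutes with the $S_n$-action, $K^l(x_0,\cdot)$ depends on $y$ only through $|y|$; grouping the total-variation sum by orbits gives $\|K^l_{x_0}-\pi\|=\|\bar K^l_n-\bar\pi\|$, where $\bar K$ is the lumped chain on $\{0,1,\dots,n\}$ and $\bar\pi$ is uniform. Unwinding the Burnside description, from state $j$ one picks $\sigma\in S_j\times S_{n-j}$ uniformly and two-colours its cycles, so the new state is $X_j+X_{n-j}$ with $X_j,X_{n-j}$ independent, $X_m$ being the number of elements of $\{1,\dots,m\}$ in a cycle coloured $1$ (for uniform $\sigma\in S_m$ with uniformly two-coloured cycles). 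A short computation --- via the exponential generating function $\sum_m z^m E[u^{X_m}]=((1-z)(1-uz))^{-1/2}$, or by reading the Chinese-restaurant construction of $\sigma$ with fresh colours as a P\'olya urn --- identifies $X_m$ as $\mathrm{BetaBinomial}(m,\tfrac12,\tfrac12)$, so $\bar K f(j)=E[f(\mathrm{Bin}(j,P)+\mathrm{Bin}(n-j,Q))]$ with $P,Q$ i.i.d.\ arcsine.

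Next I diagonalise $\bar K$. It is reversible (the Burnside process is, being a data-augmentation chain), hence $\bar K$ is symmetric with respect to $\bar\pi$ and has real, in fact nonnegative, eigenvalues. Using falling-factorial moments of the binomial, $\bar K$ sends polynomials in $k$ of degree $\le r$ into polynomials of degree $\le r$ and acts on the top-degree part by the scalar
\[
\lambda_r=\frac{1}{4^{r}}\sum_{s=0}^{r}(-1)^{r-s}\binom{r}{s}\binom{2s}{s}\binom{2r-2s}{r-s}.
\]
Hence the eigenfunctions are the orthogonal polynomials $\phi_0,\dots,\phi_n$ for $\bar\pi$ --- Hahn polynomials with the parameters making the weight uniform, i.e.\ the discrete Chebyshev (Gram) polynomials --- with $\phi_r$ having eigenvalue $\lambda_r$. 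Recognising the alternating sum as $r!\,[t^r]\,I_0(2t)^2$ with $I_0$ the modified Bessel function, and using that $I_0(2t)^2$ is even in $t$, gives $\lambda_r=0$ for odd $r$ and $\lambda_{2q}=\bigl(\binom{2q}{q}/4^{q}\bigr)^{2}$ for $r=2q$; thus $\lambda_0=1$, $\lambda_2=\tfrac14$, the even eigenvalues strictly decrease, and $\tfrac14$ is attained only at $r=2$ (next is $\lambda_4=\tfrac{9}{64}$).

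For the lower bound, pair $\bar K^l_n-\bar\pi$ with the degree-$2$ eigenfunction, a multiple of $u_2(k)=(k-\tfrac n2)^2-\tfrac{n(n+2)}{12}$: duality gives $2\|\bar K^l_n-\bar\pi\|\ge\lambda_2^{l}\,|u_2(n)|/\|u_2\|_\infty$, and since $|u_2(n)|=\tfrac{n(n-1)}{6}$ while $\|u_2\|_\infty=\max\{\tfrac{n(n-1)}{6},\,\tfrac{n(n+2)}{12}-\epsilon\}$ with $\epsilon\in\{0,\tfrac14\}$ according to the parity of $n$, the ratio is $\ge\tfrac12$ for all $n\ge2$, producing exactly $\tfrac14(\tfrac14)^l$. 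For the upper bound I use $4\|\bar K^l_n-\bar\pi\|^2\le\sum_{r\ge1}\lambda_r^{2l}\phi_r(n)^2$ together with the classical Hahn value/norm evaluations, which in this normalisation collapse to $\phi_r(n)^2=(2r+1)\prod_{i=1}^{r}\frac{n+1-i}{n+1+i}\le 2r+1$. Because the odd eigenvalues vanish, the sum equals $\sum_{q\ge1}\lambda_{2q}^{2l}\phi_{2q}(n)^2\le\sum_{q\ge1}\lambda_{2q}^{2l}(4q+1)$; peeling off $q=1$ (contributing $5\cdot16^{-l}$) and bounding $\lambda_{2q}\le\lambda_4=\tfrac{9}{64}$ and $\lambda_{2q}\le(\pi q)^{-1}$ on the tail makes $\sum_{q\ge2}\lambda_{2q}^{2l}(4q+1)$ converge and be $O(16^{-l})$ uniformly in $n$, giving $\|\bar K^l_n-\bar\pi\|\le 4(\tfrac14)^l$ with constant to spare for $l\ge2$; for $l=1$ the target $4\cdot\tfrac14=1$ is automatic.

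The main obstacle is the uniform-in-$n$ control in the upper bound. The $\chi^2$ distance after a single step is genuinely unbounded in $n$ --- after one step the $|x|$-marginal is the U-shaped $\mathrm{BetaBinomial}(n,\tfrac12,\tfrac12)$ --- so a crude second-eigenvalue estimate leaks a spurious $\log n$. Getting a clean constant needs both the exact formula for $\lambda_r$ (so that $\lambda_r\equiv0$ on odd indices and $\lambda_4$ is a definite amount below $\lambda_2$) and the exact value $\phi_r(n)$; only from the second step on does $\chi^2$ stay bounded, which is why the case $l=1$ is split off and the bound is sharp only up to the displayed constants.
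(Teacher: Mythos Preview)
Your proof is correct and shares the paper's overall architecture: lump to the orbit chain on $\{0,\dots,n\}$, invoke Cannings' lemma to get discrete Chebyshev eigenfunctions, bound total variation below via the degree-$2$ eigenfunction and above via $\chi^2$ (splitting off $l=1$). The constants and estimates in your lower and upper bounds match the paper's essentially line for line.

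Where you genuinely diverge is in establishing the eigenstructure. You observe directly that the one-step law from $0$ (or $n$) is $\mathrm{BetaBinomial}(m,\tfrac12,\tfrac12)$, hence that $\bar K f(j)=\mathbb{E}[f(\mathrm{Bin}(j,P)+\mathrm{Bin}(n-j,Q))]$ with $P,Q$ i.i.d.\ arcsine; falling-factorial moments then make polynomial preservation immediate and give the leading coefficient as $\lambda_r=\mathbb{E}[(P-Q)^r]$, which you evaluate via the Bessel identity. The paper instead proves polynomial preservation by a substantial combinatorial argument (cycle-index moments, a symmetric-function lemma, and an inductive change of basis between the $X(l_1,\dots,l_k)$ and $B(l_1,\dots,l_k)$ families), and then evaluates the eigenvalues by letting $n\to\infty$, recognising the limit as a ${}_3F_2$, and applying Clausen's formula together with Gauss summation. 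Your route is considerably shorter and more transparent; amusingly, the paper essentially rediscovers it in Section~6 for the continuous limit, noting there that $\lambda_k=\mathbb{E}[(Z-Z')^{2k}]$ with arcsine $Z,Z'$ coincides with the discrete eigenvalue. What the paper's longer argument buys is a template that carries over to the $\theta$-twisted case (Theorems~\ref{Theorem3}--\ref{Theorem4}), though in fact your arcsine representation also generalises, with $P,Q\sim\mathrm{Beta}(\tfrac{\theta}{2},\tfrac{\theta}{2})$.
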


The development thus far doesn't seem to have much to do with orthogonal polynomials. Indeed, we do not really understand where they come from or just when they will appear in other versions of the Burnside process. Let us explain their appearance here.

Let $X_0=x,X_1,X_2,\cdots$ be the realization of the Burnside chain of Theorem \ref{Theorem1}. There is enough symmetry around that $|X_0|=|x|,|X_1|,|X_2|,\cdots$ forms a Markov chain on $\{0,1,2,\cdots,n\}$. Let $Q(i,j)$ be the transition matrix for this chain. The explicit form of $Q$ is derived in Proposition \ref{Pro} below (originally it was written out in \cite[(3.1)-(3.3)]{D}). The $Q$ chain turns out to be symmetric with a uniform stationary distribution $u(j)=\frac{1}{n+1}, 0\leq j\leq n$.  

Let $T_j^n(x)$ be the discrete Chebyshev polynomials on $\{0,\cdots,n\}$---the orthogonal polynomials for the uniform distribution (\cite{Chi}).

\begin{theorem}\label{Theorem2}
For the Markov chain $Q(i,j)$ on $\{0,\cdots,n\}$, the non-zero eigenvalues are given by $1$ and $\{\frac{\binom{2k}{k}^2}{2^{4k}}\}_{k=1}^{\lfloor\frac{n}{2}\rfloor}$. Moreover the eigenfunctions corresponding to the zero eigenvalues are the discrete Chebyshev polynomials on $\{0,1,\cdots,n\}$ with odd degree. The eigenfunction corresponding to the eigenvalue $\frac{\binom{2k}{k}^2}{2^{4k}}$ is the discrete Chebyshev polynomial on $\{0,1,\cdots,n\}$ with degree $2k$, $1\leq k\leq \frac{n}{2}$.
\end{theorem}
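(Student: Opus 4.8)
The plan is to exploit two reflection symmetries of $Q$ together with the fact that $Q$ preserves the filtration of functions on $\{0,1,\dots,n\}$ by polynomial degree. First I would record, from the explicit form of $Q$ in Proposition~\ref{Pro}, that a step from a state with $|x|=i$ produces a state $y$ with $|y|\overset{d}{=}W_i+W_{n-i}'$, where $W_i$ and $W_{n-i}'$ are independent, beta-binomial$(i,\tfrac12,\tfrac12)$ and beta-binomial$(n-i,\tfrac12,\tfrac12)$ respectively; equivalently $W_i\overset{d}{=}\mathrm{Bin}(i,p)$ with $p\sim\mathrm{Beta}(\tfrac12,\tfrac12)$. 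Two consequences are immediate: $Q$ is reversible with respect to the uniform distribution $u$ (as already noted in the text), hence self-adjoint on $L^2(u)$; and, because $\mathrm{Beta}(\tfrac12,\tfrac12)$ is symmetric about $\tfrac12$, one has $W_i\overset{d}{=}i-W_i$ and $W_{n-i}'\overset{d}{=}(n-i)-W_{n-i}'$, so $W_i+W_{n-i}'\overset{d}{=}n-(W_i+W_{n-i}')$, i.e.\ each row of $Q$ is symmetric about $n/2$:
\[
Q(i,j)=Q(i,n-j),\qquad 0\le i,j\le n.
\]

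Next I would handle the zero eigenvalue. Write $R$ for the reflection $(Rf)(i)=f(n-i)$ on functions on $\{0,\dots,n\}$. The row symmetry above says exactly that $Qf=0$ whenever $Rf=-f$ (pair $j$ with $n-j$ in $\sum_j Q(i,j)f(j)$). Since $u$ is $R$-invariant, the discrete Chebyshev polynomial $T^n_d$ satisfies $T^n_d\circ R=(-1)^dT^n_d$, so the odd-degree ones $T^n_1,T^n_3,\dots$ are $R$-antisymmetric and, by a dimension count, span the whole space of $R$-antisymmetric functions. Hence they lie in $\ker Q$, giving the zero eigenvalue together with its eigenfunctions.

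For the positive eigenvalues the key step is: \emph{$Q$ sends polynomials of degree $\le d$ (restricted to $\{0,\dots,n\}$) to polynomials of degree $\le d$.} Since $(Qf)(i)=\mathbb{E}[f(W_i+W_{n-i}')]$ it suffices to take $f(j)=j^m$, expand, and use that the factorial moments of a binomial make $\mathbb{E}[W_i^a]=\sum_\ell S(a,\ell)\,(i)_\ell\,\mathbb{E}[p^\ell]$ a polynomial in $i$ of degree exactly $a$ with leading coefficient $\mathbb{E}[p^a]=\binom{2a}{a}/4^a$ (and likewise $\mathbb{E}[(W_{n-i}')^{m-a}]$ a polynomial in $i$ of degree $m-a$). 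Granting this: $QT^n_{2k}$ has degree $\le 2k$; it is $R$-symmetric, because $Q$ commutes with $R$ and $T^n_{2k}\circ R=T^n_{2k}$; and for $l<k$, self-adjointness gives $\langle QT^n_{2k},T^n_{2l}\rangle_u=\langle T^n_{2k},QT^n_{2l}\rangle_u=0$, since $QT^n_{2l}$ has degree $\le 2l<2k$. Combined with orthogonality to the odd $T^n_{2l+1}$ by parity, this forces $QT^n_{2k}=c_kT^n_{2k}$ for a scalar $c_k$.

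Finally I would identify $c_k$. Comparing coefficients of $i^{2k}$ (lower-degree terms of $T^n_{2k}$ contribute only lower powers of $i$ under $Q$), $c_k$ equals the $i^{2k}$-coefficient of $\mathbb{E}[(W_i+W_{n-i}')^{2k}]$, which by the leading-coefficient computation above is $\sum_{a}(-1)^{2k-a}\binom{2k}{a}\tfrac{\binom{2a}{a}}{4^a}\tfrac{\binom{4k-2a}{2k-a}}{4^{2k-a}}=\mathbb{E}\big[(p_1-p_2)^{2k}\big]$ for independent arcsine variables $p_1,p_2\sim\mathrm{Beta}(\tfrac12,\tfrac12)$. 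Substituting $p_r=(1+\cos\Phi_r)/2$ with $\Phi_r$ uniform and using $\mathbb{E}[\cos^{2b}\Phi]=\binom{2b}{b}/4^b$,
\[
c_k=\mathbb{E}\big[(p_1-p_2)^{2k}\big]=\frac{1}{2^{2k}}\sum_{b=0}^{k}\binom{2k}{2b}\binom{2b}{b}\binom{2k-2b}{k-b}=\frac{1}{2^{2k}}\binom{2k}{k}\sum_{b=0}^{k}\binom{k}{b}^2=\frac{\binom{2k}{k}^2}{2^{4k}},
\]
the last step by Vandermonde; for $k=0$ this is $1$, the constant eigenfunction. Since $T^n_0,T^n_1,\dots,T^n_n$ is a basis of $n+1$ eigenfunctions — the $\lceil n/2\rceil$ odd-degree ones with eigenvalue $0$, the $T^n_{2k}$ ($0\le k\le\lfloor n/2\rfloor$) with eigenvalue $\binom{2k}{k}^2/2^{4k}>0$ — this is exactly the list in the statement, and shows that $\ker Q$ is precisely the span of the odd-degree discrete Chebyshev polynomials. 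The hard part is the package of exact polynomial-degree and leading-coefficient facts for the beta-binomial moments together with the closed-form evaluation of $\mathbb{E}[(p_1-p_2)^{2k}]$; the rest — self-adjointness, the two reflection symmetries, and the dimension counts — is soft.
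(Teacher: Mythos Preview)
Your argument is correct, modulo one arithmetic slip: in the displayed evaluation of $c_k$ the prefactor $\tfrac{1}{2^{2k}}$ should be $\tfrac{1}{2^{4k}}$ throughout the intermediate steps (you picked up one factor $\tfrac{1}{4^k}$ from $(p_1-p_2)^{2k}=\tfrac{1}{4^k}(\cos\Phi_1-\cos\Phi_2)^{2k}$ but dropped the second $\tfrac{1}{4^k}$ coming from $\mathbb{E}[\cos^{2b}\Phi]\cdot\mathbb{E}[\cos^{2(k-b)}\Phi]=\binom{2b}{b}\binom{2k-2b}{k-b}/4^k$). The final value is right.

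The paper's proof takes a genuinely different route on both halves. For \emph{degree preservation}, instead of the beta-binomial convolution $W_i+W'_{n-i}$ and binomial factorial moments (your representation is exactly Proposition~\ref{Pro} at $\theta=1$), the paper works directly from the Burnside description, expanding $\mathbb{E}\big[(X-\tfrac{n}{2})^{2a}\big]$ through cycle multiplicities $a_i(\sigma),b_i(\sigma)$ and invoking P\'olya's cycle-index identities (Lemma~\ref{L1.1_6}) together with a symmetric-function reduction (Lemma~\ref{L1.1_5}); this is substantially longer. For the \emph{eigenvalues}, the paper first proves $\lambda_k$ is independent of $n$ for $n\ge 2k$, then evaluates $\lambda_k=\sum_j\alpha_j^nT^n_{2k}(j)$ by letting $n\to\infty$ (the first-row weights $\alpha_j^n$ converge to the arcsine density), obtaining $\lambda_k={}_3F_2(\tfrac12,-2k,2k+1;1,1\mid 1)$, and closes with Clausen's formula and Gauss's summation (Lemmas~\ref{L1.1_2}, \ref{L1.1_3}). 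Your identification $\lambda_k=\mathbb{E}[(p_1-p_2)^{2k}]$ for independent arcsine $p_1,p_2$, evaluated by the cosine substitution and Vandermonde, bypasses the hypergeometric machinery altogether; it is in fact the $\theta=1$ instance of the formula the paper reaches only later, in the continuous limit of Section~6. What the paper's approach buys is a template that carries over to the $\theta$-deformation of Theorem~\ref{Theorem3}, where the eigenvalue is a genuine ${}_3F_2$ that does not collapse; your argument is shorter and entirely elementary for the case at hand.
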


Theorem \ref{Theorem2} and classical analysis yield Theorem \ref{Theorem1}. The proof appears in Section \ref{Sect.3}.

\subsection{Hahn polynomials}
The discrete Chebyshev polynomials sit naturally in the family of Hahn polynomials: for $\alpha,\beta>0$, 
\begin{equation}
    Q_j(x)=~_3 F_2(-j,j+\alpha+\beta-1,-x;\alpha,-n|1)
\end{equation}
(Set $\alpha=\beta=1$ for discrete Chebyshev). $Q_j$ is a polynomial of degree $j$ for $0\leq j\leq n$. The usual notation indicates with $\alpha,\beta$ and $n$ sub and superscripts suppressed here. These polynomials are orthogonal with respect to the `beta-binomial distribution' defined and discussed in Section \ref{Sect.2.3} below. 

It is natural to ask, is there some variant of the Burnside process having $Q_j$ as eigenfunctions? This is a mathematicians question, as per the first paragraph of this introduction.
To answer it, we found a useful \underline{abstract} generalization of the general Burnside process which we hope has a life of its own. Specialized to $C_2^n$ and $S_n$ it gives a Markov chain on $\{0,1,\cdots,n\}$ with a `beta-binomial distribution' having Hahn polynomial eigenfunctions and allowing a parallel to Theorem \ref{Theorem1}. 

Section \ref{Sect.2} of this paper sets out needed background: on Markov chains, the Burnside process and orthogonal polynomials (it also gives a brief overview of the many other Markov chains with polynomial eigenfunctions---from Askey-Wilson to Macdonald!) Theorems \ref{Theorem1} and \ref{Theorem2} are proved in Section \ref{Sect.3}. Section \ref{Sect.4} explains our `twisted Burnside process' from first principles. The analogues of Theorems \ref{Theorem1} and \ref{Theorem2} are stated in Section \ref{Sect.5}. In Section 6, following a suggestion of a referee, we pass to the limit as $n$ tends to infinity to find a continuous analog of the Burnside process with Jacobi polynomial eigenfunctions. The limit is usefully related to the finite case.

\subsection{Acknowledgement}
Richard Askey's support and encouragement was crucial throughout our fledgling efforts to learn and apply the beautiful subject that he built. He took us in, patiently fielded `stupid questions', introduced us to his community and published our papers. We thank Howard Cohl and Dennis Stanton for their help. We also thank the anonymous referees for their useful comments.

\section{Background}\label{Sect.2}
This section contains background material on the Burnside process and related `auxiliary variables' algorithms (Section \ref{Sect.2.1}). Needed theory from the analytic-geometric theory of Markov chains is in Section \ref{Sect.2.2}. Background on orthogonal polynomials and connections to Markov chains (Cannings method) are in Section \ref{Sect.2.3}. Since our readership has diverse backgrounds we attempt a friendly tutorial.

\subsection{The Burnside process and auxiliary variables}\label{Sect.2.1}
The Burnside process was introduced by Mark Jerrum (\cite{J}) as a contribution to `computational P\'olya theory'. P\'olya theory is about enumeration under symmetry: ``how many different ways can ten dice be colored with red, white, blue where the order of the dice and the symmetry of a die are neglected?'' This is a standard part of combinatorics. The book by P\'olya and Read (\cite{PR}) gives a classical account.
Jerrum, working with Leslie Ann Goldberg (\cite{G,G2,J2}), managed to show that for carefully selected problems, these computations are \#P-complete (this means being polynomially equivalent to dozens of other problems that are believed to require exponential time). Our work shows that for many natural problems, the Burnside process is highly efficient. This opens up a research area---when is it good? Applications of the Burnside process to practical problems in computer science (\cite{HMV}) suggest it is worth study. 

A different motivation comes because the Burnside process is a special case of a wonderful `unifying algorithm' called variously `auxiliary variables', `data augmentation' or `hit and run'. \underline{Briefly} let $\mathcal{X}$ be a finite or countable set, $\pi(x)>0$ a probability on $\mathcal{X}$. The job is to invent a Markov chain to sample from $\pi(x)$. Auxiliary variables create `non-local' Markov chains, able to move far away in a single step. They seem to mix much more rapidly than standard local algorithms. Introduce a set $I$ of auxiliary variables. For each $i\in I$ and $x\in \mathcal{X}$, choose a proposal kernel $w_x(i)\geq 0$ (a way of moving from $x$ to $i$) such that $\sum_i w_x(i)=1$ and for each $i$, there is at least one $x$ with $w_x(i)>0$. These ingredients define a joint probability $f(x,i)=\pi(x)w_x(i)$ on $\mathcal{X}\times I$. To proceed, for each $i$, a Markov chain $K_i(x,y)$ with stationary distribution $f(x|i)$ must be specified. The auxiliary variables algorithm is
\begin{itemize}
    \item From $x$, choose $i$ with probability $w_x(i)$
    \item From $i$, choose $y$ with probability $K_i(x,y)$
\end{itemize}
The chain moves from $x$ to $y$ in one step. The transition matrix is 
\begin{equation*}
    K(x,y)=\sum_i w_x(i)K_i(x,y).
\end{equation*}
A direct calculation shows that $\pi(x)$ is a stationary distribution:
\begin{eqnarray*}
\sum_x  \pi(x)K(x,y)&=& \sum_x \pi(x)\sum_{i}w_x(i) K_i(x,y)=\sum_i \sum_x w_x(i)\pi(x) K_i(x,y)\\
&=& \sum_i m(i)\sum_x f(x|i)K_i(x,y)=\sum_i m(i)f(y|i)=\pi(y)
\end{eqnarray*}
($m(i)=\sum_x f(x,i)$ is the marginal distribution).
Of course connectedness and aperiodicity of $K$ must be checked so that the Perron-Frobenius theorem is in force. 

A comprehensive review of this class of algorithms is in \cite{AD} which gives dozens of examples and special cases including the celebrated Swendsen-Wang algorithm for sampling from the Ising model.

The point for now is that essentially \underline{none} of the algorithms have sharp running time analysis. The Burnside process falls into this class: take $I=G$, $w_x(i)$ the uniform distribution on $G_x$ and $K_g(x,y)$ the uniform distribution on $\mathcal{X}_g$. The hope is, because of the extra group structure available, analysis will be easier for this special case. That was our initial motivation.

There has been some previous effort to study the Burnside process on $\mathcal{X}=[k]^n$, with $G=S_n$---here $[k]=\{1,2,\cdots,k\}$. See \cite{D} and Example \ref{Example1} in Section \ref{Sect.4} for the description of this process.

Theorems \ref{Theorem1} and \ref{Theorem2} study $k=2$. Jerrum (\cite{J}) showed that for $k=2$ the total variation mixing time is at most of order $\sqrt{n}$; sharpening this, Diaconis (\cite{D}) proved that, for $k=2$ and $x_0=(1,\cdots,1)$, $\|K_{x_0}^l-\pi\|\leq (1-c)^l$ with $c=\frac{1}{\pi}$ (with $\pi=3.14159\cdots$). Theorem \ref{Theorem1} sharpens this to get the exact rate and makes the connection to orthogonal polynomials (versus the complex, difficult argument of \cite{D}). David Aldous (\cite{AF}) used a clever coupling argument to show that, for general $k$, $\|K_x^l-\pi\|\leq n(1-\frac{1}{k})^l$. This implies $l$ of order $\log{n}$ steps suffice for $k=2$. Further results are in \cite{Chen}.

\subsection{Analysis of Markov chain convergence}\label{Sect.2.2}
For readers unfamiliar with Markov chains, the best introduction is the book by David Levin and Yuval Peres \cite{LP}. This contains all the bounds below and much more. The \underline{analytic-geometric} theory of Markov chains is well developed in Laurent Saloff-Coste's \cite{Sal}.

Let $\mathcal{X}$ be a finite set, $\pi(x)>0,\sum_x\pi(x)=1$ a probability distribution on $\mathcal{X}$. A Markov chain on $\mathcal{X}$ is specified by a matrix $K(x,y)\geq 0$ with $\sum_y K(x,y)=1$. Suppose that $\pi$ is a stationary distribution for $K$: $\sum_x \pi(x)K(x,y)=\pi(y)$ (so $\pi$ is a left eigenvector for $K$ with eigenvalue $1$). Call $\pi,K$ \underline{reversible} if the detailed balance condition $\pi(x)K(x,y)=\pi(y)K(y,x)$ is satisfied for all $x,y\in\mathcal{X}$. All the Markov chains below satisfy detailed balance. Let $L^2(\pi)=\{f:\mathcal{X}\rightarrow \mathbb{R}: \sum f(x)^2\pi(x)<\infty  \}$. $K$ acts on $L^2(\pi)$ by $Kf(x)=\sum_y K(x,y)f(y)$. Detailed balance is equivalent to saying that $\langle Kf|g \rangle=\langle f|Kg \rangle$ so $K$ is a bounded, self-adjoint operator on $L^2(\pi)$. The spectral theorem is in force: there exist \underline{eigenvalues} $\beta_i$ ($1=\beta_0\geq \beta_1\geq \cdots \geq \beta_{|\mathcal{X}|-1}\geq -1$) and \underline{eigenfunctions} $\psi_i(x)$ (so $K\psi_i(x)=\beta_i\psi_i(x)$). 

Throughout this section assume $K$ is connected (for every $x,y\in\mathcal{X}$ there is $l\geq 1$ with $K^l(x,y)>0$) and aperiodic ($\beta_{|\mathcal{X}|-1}>-1$). It is useful to introduce two distances
\begin{itemize}
    \item Total variation distance---$\|K_x^l-\pi\|=\frac{1}{2}\sum_y|K^l(x,y)-\pi(y)|\\
    =\max_{\|f\|_{\infty}\leq 1} K^l(f)-\pi(f)$. Here $\|f\|_{\infty}=\max_x |f(x)|,\pi(f)=\sum \pi(x)f(x)$. This equality is easy to prove (the maximizing $f$ is $1$ at $y$ if $K^l(x,y)\geq \pi(y)$ and $-1$ otherwise). 
    \item Chi-square distance---$\chi_{x}^2(l)=\sum_y \frac{(K^l(x,y)-\pi(y))^2}{\pi(y)}$.
\end{itemize}
The Cauchy-Schwarz inequality implies
\begin{equation}
    4\|K_x^l-\pi\|^2\leq \chi_x^2(l).
\end{equation}
This is a useful route to getting bounds on convergence: bound $L^1$ by $L^2$ and use eigenvalues to bound $L^2$. Eigenvalues and eigenfunctions come in by
\begin{equation}
    \chi_x^2(l)=\sum_{i=1}^{|\mathcal{X}|-1}\beta_i^{2l}\psi_i^2(x).
\end{equation}

The following lower bound on total variation and chi-square distance is proved in \cite[Lemma 2.1]{DKS}. It is applied in Section \ref{Sect.3}.

\begin{proposition}\label{Lower}
Suppose that $\psi$ is an eigenfunction of $K$ with eigenvalue $\beta$ such that $\pi(\psi)=0$. Assume $\|\psi\|^2=1$. Then
\begin{equation*}
    \chi^2_x(l)\geq |\psi(x)|^2|\beta|^{2l}.
\end{equation*}
For $\psi$ with $\|\psi\|_{\infty}<\infty$, \begin{equation*}
    \|K_x^l-\pi\|\geq \frac{|\psi(x)||\beta|^l}{2\|\psi\|_{\infty}}.
\end{equation*}
\end{proposition}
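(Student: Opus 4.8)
The plan is to derive both inequalities from one elementary computation—the action of $K^l$ on the eigenfunction $\psi$—combined with Cauchy--Schwarz for the chi-square statement and the triangle inequality for the total variation statement; nothing beyond the spectral facts already recorded above is needed. Write $h_x(y) = K^l(x,y)/\pi(y) - 1$, so that $\chi_x^2(l) = \|h_x\|_{L^2(\pi)}^2$. The first step is the identity, valid for every $f\in L^2(\pi)$,
\[
\langle h_x, f\rangle = \sum_y \pi(y)\Big(\tfrac{K^l(x,y)}{\pi(y)} - 1\Big)f(y) = K^l f(x) - \pi(f).
\]
Taking $f=\psi$, using that $\psi$ is an eigenfunction (so $K^l\psi = \beta^l\psi$) and that $\pi(\psi)=0$, this collapses to the single relation $\langle h_x,\psi\rangle = \beta^l\psi(x)$, which drives everything.

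For the chi-square bound, since $\|\psi\|^2=1$, Cauchy--Schwarz (equivalently Bessel's inequality) gives $\chi_x^2(l) = \|h_x\|^2 \geq \langle h_x,\psi\rangle^2 = |\beta|^{2l}|\psi(x)|^2$. (Equivalently one may extend $\psi$ to an orthonormal basis of eigenfunctions and keep only its term in the expansion $\chi_x^2(l)=\sum_{i\geq 1}\beta_i^{2l}\psi_i^2(x)$ displayed above.) For the total variation bound, assume $\|\psi\|_\infty<\infty$ and estimate
\[
2\|K_x^l-\pi\| = \sum_y |K^l(x,y)-\pi(y)| \ \geq\ \frac{1}{\|\psi\|_\infty}\sum_y |K^l(x,y)-\pi(y)|\,|\psi(y)| \ \geq\ \frac{1}{\|\psi\|_\infty}\,\big|\langle h_x,\psi\rangle\big| = \frac{|\beta|^l|\psi(x)|}{\|\psi\|_\infty},
\]
and divide by $2$. (Feeding $f=\pm\psi/\|\psi\|_\infty$ into the variational formula $\|K_x^l-\pi\| = \max_{\|f\|_\infty\le 1}(K^l f(x)-\pi(f))$ would remove the factor $2$, but the stated bound suffices.)

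I do not expect a genuine obstacle here. The only points that need care are bookkeeping ones: justifying the orthonormal eigenbasis used in the parenthetical remark (guaranteed by reversibility, which is in force throughout the section), and choosing the sign in the final display so that the absolute values line up. Both are routine, and the proof is complete once the identity $\langle h_x,\psi\rangle = \beta^l\psi(x)$ is in hand.
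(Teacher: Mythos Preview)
Your argument is correct. The paper does not actually prove this proposition; it simply cites \cite[Lemma 2.1]{DKS} and moves on, so there is no in-paper proof to compare against. The approach you give---compute $\langle h_x,\psi\rangle = K^l\psi(x)-\pi(\psi)=\beta^l\psi(x)$, then apply Cauchy--Schwarz for the $\chi^2$ bound and the triangle inequality (equivalently, test the signed measure $K_x^l-\pi$ against $\psi/\|\psi\|_\infty$) for the total variation bound---is exactly the standard proof one would expect to find in the reference.
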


\subsection{Orthogonal polynomials and Markov chains}\label{Sect.2.3}
Background on orthogonal polynomials is wonderfully presented in the introductory text by Chihara \cite{Chi}. The recent encyclopedia by Ismail \cite{Ism} brings in much further material. 

\paragraph{Hahn polynomials}
These are the orthogonal polynomials on $\{0,1,\cdots,n\}$ with respect to the beta-binomial distribution. For reference and variations of the beta-binomial distribution see \cite{Wil}. Fix $\alpha,\beta>0$. Define 
\begin{equation}\label{Beta}
    m(j)=\binom{n}{j}\frac{(\alpha)_j(\beta)_{n-j}}{(\alpha+\beta)_{n}}, (\alpha)_x=\alpha(\alpha+1)\cdots (\alpha+x-1), (\alpha)_0=1.
\end{equation}
This is a probability distribution on $\{0,1,\cdots,n\}$ generated as a beta mixture of binomial distributions
\begin{equation*}
    m(j)=\int_{0}^1 \binom{n}{j}x^j(1-x)^{n-j}\frac{\Gamma(\alpha+\beta)}{\Gamma(\alpha)\Gamma(\beta)}x^{\alpha-1}(1-x)^{\beta-1}dx.
\end{equation*}
When $\alpha=\beta=1$, $m(j)=\frac{1}{n+1}$---the uniform distribution.

The orthogonal polynomials for $m(j)$ are called \underline{Hahn polynomials}. Detailed development is in \cite{KM}.
They have the explicit form
\begin{equation*}
     Q_j(x)=~_3F_2(-j,j+\alpha+\beta-1,-x;\alpha,-n|1)
\end{equation*}
normalized so $Q_j(0)=1$, $Q_j(n)=\frac{(-\beta-j)_j}{(\alpha+1)_j}$.
The standard form
\begin{equation*}
    {}_r F_s(a_1,\cdots,a_r;b_1,\cdots,b_s|x)=\sum_{l=0}^{\infty}\frac{(a_1\cdots a_r)_l x^l}{(b_1\cdots b_s)_l l!}
\end{equation*}
with $(a_1\cdots a_r)_l=\prod_{i=1}^r(a_i)_l$, is used. When $\alpha=\beta=1$ these are the discrete Chebyshev polynomials (which we denote by $T_j(x)$)
\begin{equation*}
    T_0(x)=1, T_1(x)=\frac{n-2x}{n},T_2(x)= \frac{6x^2-6nx+n(n-1)}{n(n-1)}.
\end{equation*}
The discrete Chebyshev polynomials satisfy the following recurrence relation (\cite[(6.2.8)]{Ism})
\begin{equation}\label{Recurrence}
    (j+1)(n-j) T_{j+1}(x)=(2j+1)(n-2x)T_j(x)-j(j+n+1)T_{j-1}(x),
\end{equation}
where $j=0,1,\cdots,n-1$ and $T_{-1}(x)=0$.

In Section \ref{Sect.3} the proof of Theorem \ref{Theorem2} uses `Cannings argument' (\cite{Can}), a frequently used tool for proving that a Markov chain has orthogonal polynomial eigenfunctions:

`If the operator $K$ sends polynomials of degree $j$ to polynomials of degree $j$ for all $0\leq j\leq n$, then the operator has the orthogonal polynomials for the stationary distribution as its eigenfunctions.'

The following lemma is a formal statement and proof of Cannings lemma. A very similar argument works for multivariate polynomials.

\begin{lemma}\label{L1.1_4}
Suppose that $K_n$ is the transition matrix of a reversible Markov chain with state space $\mathcal{X}=\{0,1,\cdots,n\}$ and stationary distribution $\pi$. Suppose that for any polynomial $f$ on $\mathcal{X}$ of degree $l\leq n$, $K_nf$ is a polynomial on $\mathcal{X}$ of degree less than or equal to $l$. Then we conclude that the eigenfunctions of $K_n$ are given by orthogonal polynomials for $\pi$ of degree less than or equal to $n$. 

Moreover, for any $l\in\mathbb{N}$, if we further assume that the degree $l$ monomial of $K_n[x^l]$ does not depend on $n$ for $n\geq l$, then the eigenvalue of $K_n$ that corresponds to the eigenfunction of degree $l$ does not depend on $n$ for $n\geq l$.
\end{lemma}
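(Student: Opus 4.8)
The plan is to run the classical Cannings argument on the flag of polynomial subspaces, and then extract the eigenvalue from a single leading-coefficient comparison.

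For $0\le l\le n$ write $\mathcal{P}_l$ for the space of functions on $\{0,1,\dots,n\}$ that are restrictions of a polynomial of degree at most $l$; these form an increasing flag $\mathcal{P}_0\subset\mathcal{P}_1\subset\cdots\subset\mathcal{P}_n$, with $\dim\mathcal{P}_l=l+1$ and $\mathcal{P}_n=\mathbb{R}^{\{0,\dots,n\}}$. Let $p_0,p_1,\dots,p_n$ be orthogonal polynomials for $\pi$ (Gram--Schmidt applied to $1,x,\dots,x^n$ in $L^2(\pi)$); because $\pi$ is supported on all $n+1$ points, $p_l$ has degree exactly $l$, so $\mathcal{P}_l=\mathcal{P}_{l-1}\oplus\mathbb{R}p_l$ and in fact $\mathbb{R}p_l=\mathcal{P}_l\cap\mathcal{P}_{l-1}^{\perp}$, the orthogonal complement taken in $L^2(\pi)$. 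The hypothesis says $K_n$ preserves every $\mathcal{P}_l$, and reversibility says $K_n$ is self-adjoint on $L^2(\pi)$.

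The key step is to show that each $p_l$ is an eigenfunction. Certainly $K_n p_l\in\mathcal{P}_l$ by hypothesis. For any $r\in\mathcal{P}_{l-1}$, self-adjointness gives $\langle K_n p_l,r\rangle_\pi=\langle p_l,K_n r\rangle_\pi$; since $K_n r\in\mathcal{P}_{l-1}$ (the hypothesis applied to degree $l-1\le n$) and $p_l\perp\mathcal{P}_{l-1}$, this inner product vanishes. Hence $K_n p_l\in\mathcal{P}_l\cap\mathcal{P}_{l-1}^{\perp}=\mathbb{R}p_l$, i.e.\ $K_n p_l=\beta_l p_l$ for some scalar $\beta_l$. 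As $\{p_0,\dots,p_n\}$ is a basis of the whole space, these exhaust the eigenfunctions, which establishes the first assertion. For the second assertion, write $p_l(x)=a_l x^l+(\text{lower order})$ with $a_l\ne 0$. Expanding $p_l$ into monomials and using that $K_n$ does not raise degree, the only term contributing to the coefficient of $x^l$ in $K_n p_l$ is $a_l K_n[x^l]$, so that coefficient equals $a_l\,c_l(n)$, where $c_l(n)$ is the coefficient of $x^l$ in the polynomial $K_n[x^l]$. Comparing with $K_n p_l=\beta_l p_l$, whose $x^l$-coefficient is $\beta_l a_l$, and cancelling $a_l\ne 0$, we get $\beta_l=c_l(n)$. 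The extra hypothesis says exactly that $c_l(n)$ is constant in $n$ for $n\ge l$, giving the conclusion.

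The argument is mostly bookkeeping; the one place that genuinely uses both hypotheses at once is the self-adjointness step proving $K_n p_l\perp\mathcal{P}_{l-1}$, and I expect that — rather than any computation — to be the crux. It is also worth recording that the $\beta_l$ come out real automatically (as they must, $K_n$ being self-adjoint), and that the identification $\beta_l=c_l(n)$ uses nothing about $\pi$ beyond the fact that $\deg p_l=l$.
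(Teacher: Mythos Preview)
Your proof is correct and follows essentially the same approach as the paper: both use self-adjointness of $K_n$ together with degree preservation to show that $K_n p_l$ is orthogonal to $\mathcal{P}_{l-1}$, and both read off the eigenvalue from the leading coefficient of $K_n[x^l]$. Your version is slightly streamlined in that you test $K_n p_l$ directly against arbitrary $r\in\mathcal{P}_{l-1}$ (using $K_n r\in\mathcal{P}_{l-1}$), whereas the paper sets this up as an induction on $l$ and tests against the lower-degree orthogonal polynomials $\phi_{n,i}$ using the inductive hypothesis $K_n\phi_{n,i}=\lambda_{n,i}\phi_{n,i}$; but this is a cosmetic difference, not a different argument.
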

\begin{proof}
We prove by induction that for any $0\leq l\leq n$, the orthogonal polynomial for $\pi$ of degree $l$ is an eigenfunction of $K_n$. We denote the $l$th orthogonal polynomial for $\pi$ by $\phi_{n,l}$. For $l=0$, we have $K_n \vec{1}=\vec{1}$, where $\vec{1}$ is the constant function taking value $1$. Now for $l$ such that $1\leq l\leq n$, by the induction hypothesis we suppose that for any $0\leq i\leq l-1$, $K_n\phi_{n,i}=\lambda_{n,i}\phi_{n,i}$. Now $\phi_{n,l}$ is a polynomial of degree $l$, hence $K_n\phi_{n,l}$ is a polynomial of degree $\leq l$. Expanding $K_n\phi_{n,l}$ in terms of $\{\phi_{n,i}\}_{i=0}^l$, we get
\begin{equation}\label{eigenv}
    K_n\phi_{n,l}=\lambda_{n,l}\phi_{n,l}+\sum_{i=0}^{l-1}a_i\phi_{n,i}
\end{equation}
for some $\lambda_{n,l}$, $\{a_i\}_{i=0}^{l-1}$. Now consider the inner product for $\pi$ defined as
\begin{equation*}
    \langle f,g\rangle_{\pi}:=\sum_{k=0}^n f(k)g(k)\pi(k).
\end{equation*}
$K_n$ is self-adjoint with respect to this inner product, hence we have for any $0\leq i\leq l-1$,
\begin{equation*}
 \langle K_n\phi_{n,l},\phi_{n,i} \rangle_{\pi}=\langle \phi_{n,l},K_n\phi_{n,i} \rangle_{\pi},
\end{equation*}
which leads to
\begin{equation*}
    a_i=\frac{\lambda_{n,i}\langle\phi_{n,l},\phi_{n,i}\rangle_{\pi}}{\langle\phi_{n,i},\phi_{n,i}\rangle_{\pi}}=0.
\end{equation*}
Therefore we conclude that $K_n\phi_{n,l}=\lambda_{n,l}\phi_{n,l}$.

Now we assume that the degree $l$ monomial of $K_n[x^l]$ does not depend on $n$ for $n\geq l$. By (\ref{eigenv}), the degree $l$ monomial of $K_n[x^l]$ is given by $\lambda_{n,l} x^l$, hence $\lambda_{n,l}$ does not depend on $n$ for $n\geq l$. Note that from the argument in the last paragraph, the eigenvalue of $K_n$ that corresponds to the eigenfunction $\phi_{n,l}$ is given by $\lambda_{n,l}$, which does not depend on $n$ for $n\geq l$.
\end{proof}

The connections between orthogonal polynomials and Markov chains is long-standing. Indeed, perhaps the first Markov chain---the Bernoulli-Laplace urn---is diagonalized by Hahn polynomials. To briefly recall, Bernoulli and Laplace considered two urns, the left containing $n$ red balls, the right containing $n$ black balls. Each time, a ball is picked uniformly at random and the two balls are switched. The number of red balls in the left urn evolves as a Markov chain on $\{0,1,\cdots,n\}$. In \cite{DS} this chain is diagonalized and shown to have Hahn polynomial eigenfunctions. That same paper treats the Ehrenfest's urn (with Krawtchouk polynomial eigenfunctions) and several $q$-deformations of these two examples. These urn models, and the birth and death chains discussed below, are examples of `local Markov chains'. Most appearances of orthogonal polynomials have occurred for such local chains (analogs of the Laplacian). The present examples are much more vigorous. 

Jacobi polynomials arise as eigenfunctions of a Markov chain constructed from the `Gibbs sampler' \cite{DKS}. Extensions of this construction need the full power of the Askey-Wilson polynomials (\cite{BW}).

An extensive connection between orthogonal polynomials and birth-death chains follows from the Karlin-McGregor theory. A textbook account with full details is in \cite{And}.
Multivariate orthogonal polynomials as in \cite{DX} arise in natural genetics problems. See \cite{DiaGri,KZ,Xu} for multivariate Hahn and Krawtchouk polynomials and \cite{DR} for Macdonald polynomials. This is just a small sample, drawn from our work with colleagues. The list goes on and on.

\section{Proof of Theorems \ref{Theorem1} and \ref{Theorem2}}\label{Sect.3}
The proof consists of three main parts. First we show that the operator $Q$ of Section \ref{Sect.1.1} sends polynomials to polynomials. Thus Cannings lemma (Lemma \ref{L1.1_4}) shows that the discrete Chebyshev polynomials are the eigenfunctions of the Burnside process (lumped to orbits). Next, the eigenvalues are computed, proving Theorem \ref{Theorem2}. Finally, the eigen structure and analytic tools of Section \ref{Sect.2.2} are used to prove Theorem \ref{Theorem1}.

Throughout this section $\alpha=\beta=1$ and the stationary distribution $u(j)=\frac{1}{n+1}$ for $0\leq j\leq n$.

\subsection{Proof of Theorem \ref{Theorem2}}

In the following, we denote by $T_{j}^n(x)$ the discrete Chebyshev polynomial on $\{0,1,\cdots,n\}$ with degree $j$ (to emphasize the dependence on $n$). We also assume that $n$ is even below (the proof for odd $n$ is similar).

Before the proof of Theorem \ref{Theorem2}, we present some preparatory lemmas.

The following lemma follows from P\'olya's cycle index theorem (\cite{SL},\cite[Section 5]{DS2}).
We recall the notation. For $\sigma$ in the symmetric group $S_n$, write $a_i(\sigma)$ for the number of $i$-cycles when $\sigma$ is written in cycle notation. So $a_1(\sigma)$ is the number of fixed points, $a_2(\sigma)$ is the number of transpositions,$\cdots$ Thus $0\leq a_i(\sigma)\leq n$ and $\sum_{i=1}^n i a_i(\sigma)=n$. Write the cycle index of $S_n$ as 
\begin{equation}\label{Pol1}
    C_n(x_1,\cdots,x_n)=\frac{1}{n!}\sum_{\sigma\in S_n}\prod_{i=1}^{n}  x_i^{a_i(\sigma)}.
\end{equation}
The generating function of these $C_n$ is $C(t)=\sum_{n=0}^{\infty}C_n t^n$. P\'olya showed
\begin{equation}\label{Pol2}
    C(t)=e^{\sum_{i=1}^{\infty}x_i \frac{t^i}{i}}.
\end{equation}
Repeatedly differentiating in the $x_i$, setting $x_i=1$ and comparing coefficient gives 

\begin{lemma}\label{L1.1_6}
For $n=1,2,\cdots$, any $1\leq k_1<\cdots<k_r\leq n$ and $l_1,\cdots,l_r\geq 1$, if $\sum_{i=1}^r k_i l_i\leq n$, then for a random permutation $\sigma$ uniformly chosen in $S_n$, 
\begin{equation}
    \prod_{i=1}^r(l_i)!\mathbb{E}\Big[\prod_{i=1}^r\binom{a_{k_i}(\sigma)}{l_i}\Big]=\prod_{i=1}^r\frac{1}{k_i^{l_i}};
\end{equation}
while if $\sum_{i=1}^r k_i l_i> n$, then
\begin{equation}
    \mathbb{E}\Big[\prod_{i=1}^r\binom{a_{k_i}(\sigma)}{l_i}\Big]=0.
\end{equation}
\end{lemma}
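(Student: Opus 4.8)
The plan is to extract the desired factorial-moment identity directly from Pólya's formula (\ref{Pol2}) for the generating function $C(t) = e^{\sum_{i\geq 1} x_i t^i/i}$ by differentiating in the variables $x_{k_1},\dots,x_{k_r}$ the appropriate number of times and then setting all $x_i = 1$. First I would recall the combinatorial meaning of the coefficients: since $C_n(x_1,\dots,x_n) = \frac{1}{n!}\sum_{\sigma\in S_n}\prod_i x_i^{a_i(\sigma)}$, applying the operator $\prod_{i=1}^r \frac{1}{l_i!}\,\partial_{x_{k_i}}^{l_i}$ and then evaluating at $x_1=\dots=x_n=1$ produces exactly $\frac{1}{n!}\sum_{\sigma\in S_n}\prod_{i=1}^r \binom{a_{k_i}(\sigma)}{l_i} = \mathbb{E}\big[\prod_{i=1}^r \binom{a_{k_i}(\sigma)}{l_i}\big]$, because $\frac{1}{l!}\partial_x^l x^a\big|_{x=1} = \binom{a}{l}$. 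So the left-hand quantity $\prod_i (l_i)!\,\mathbb{E}[\cdots]$ is the coefficient of $t^n$ in $\big(\prod_{i=1}^r \partial_{x_{k_i}}^{l_i}\big) C(t)\big|_{x\equiv 1}$.

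Next I would carry out the differentiation on the right-hand side of (\ref{Pol2}). Since the $x_i$ appear linearly in the exponent, $\partial_{x_{k}} e^{\sum_i x_i t^i/i} = \frac{t^k}{k} e^{\sum_i x_i t^i/i}$, and iterating, $\prod_{i=1}^r \partial_{x_{k_i}}^{l_i}\, C(t) = \Big(\prod_{i=1}^r \frac{t^{k_i l_i}}{k_i^{l_i}}\Big) C(t)$ (here I use that the $k_i$ are distinct, so the partial derivatives commute and simply pull down one factor $t^{k_i}/k_i$ each time). Setting $x_i = 1$ for all $i$ gives $\sum_{i\geq 1} t^i/i = -\log(1-t)$, so $C(t)\big|_{x\equiv 1} = e^{-\log(1-t)} = \frac{1}{1-t} = \sum_{m\geq 0} t^m$. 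Therefore
\begin{equation*}
\Big(\prod_{i=1}^r \partial_{x_{k_i}}^{l_i}\, C(t)\Big)\Big|_{x\equiv 1} = \frac{t^{\sum_i k_i l_i}}{\prod_i k_i^{l_i}}\cdot\frac{1}{1-t} = \frac{1}{\prod_i k_i^{l_i}}\sum_{m\geq \sum_i k_i l_i} t^m.
\end{equation*}
Reading off the coefficient of $t^n$: if $\sum_i k_i l_i \leq n$ it equals $1/\prod_i k_i^{l_i}$, and if $\sum_i k_i l_i > n$ it equals $0$. Equating with the combinatorial side from the first paragraph yields precisely the two claimed formulas.

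The only genuinely delicate point — and the step I would write most carefully — is the interchange of the differentiation operators $\partial_{x_{k_i}}^{l_i}$ with the summation defining $C(t)$ as a formal power series, and the justification that evaluation at $x_i=1$ is legitimate term-by-term. This is routine at the level of formal power series in $t$ (each coefficient $C_n$ is a polynomial in finitely many $x_i$, so differentiation and evaluation are finite operations), but one should note that the hypothesis $k_i \leq n$ together with $\sum_i k_i l_i \leq n$ (or the complementary inequality) is exactly what makes the coefficient extraction consistent with the constraint $\sum_i i\,a_i(\sigma) = n$. No convergence issues arise because everything can be read off coefficientwise in $t$; alternatively one can restrict to $|t| < 1$ and argue analytically. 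I expect this bookkeeping, rather than any real analytic obstacle, to be the main thing to get right.
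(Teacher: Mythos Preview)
Your proof is correct and follows exactly the route the paper indicates: differentiate the cycle-index generating function (\ref{Pol2}) in the variables $x_{k_i}$, set all $x_i=1$ to obtain $t^{\sum k_il_i}\big/\big((1-t)\prod_i k_i^{l_i}\big)$, and read off the coefficient of $t^n$. The paper gives only the one-line sketch ``repeatedly differentiating in the $x_i$, setting $x_i=1$ and comparing coefficient gives''; you have filled in the details accurately.
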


Below, by ``even order terms'' we mean monomials of the form $\alpha X_{i_1}^{l_1}\cdots X_{i_k}^{l_k}$ with $i_1<\cdots<i_k$, $\alpha\in\mathbb{R}$ and $l_1,\cdots,l_k$ even.
\begin{lemma}\label{L1.1_5}
Suppose that $N,a\geq 1$. The ``even order terms'' in the expansion of $(\sum_{i=1}^N X_i)^{2a}$ can be expressed as a linear combination of the following form
\begin{equation}\label{Eq33}
    \prod_{q=1}^k(\sum_{i=1}^N X_i^{2l_q})
\end{equation}
for some $k\geq 1$, $l_1,\cdots,l_k\geq 1$ and $\sum_{q=1}^k l_q=a$. Moreover, the coefficients in the linear combination only depend on $a$.
\end{lemma}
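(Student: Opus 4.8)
The plan is to extract the ``fully even'' part of $(\sum_{i=1}^N X_i)^m$ for $m=2a$ through an exponential generating function, the key point being that the operation ``keep only the monomials in which every variable occurs to an even power'' is \emph{multiplicative}: applied to a product of formal power series in disjoint sets of variables, it acts factor by factor. Since $\sum_{m\ge 0}\frac{t^m}{m!}\big(\sum_{i=1}^N X_i\big)^m=\prod_{i=1}^N e^{tX_i}$ and the even part of $e^{tX_i}$ (a series in the single variable $X_i$) is $\cosh(tX_i)$, the even part of the left-hand side is $\prod_{i=1}^N\cosh(tX_i)$. Writing $E_N(a)$ for the sum of the even order terms of $(\sum_i X_i)^{2a}$, this says $\prod_{i=1}^N\cosh(tX_i)=\sum_{a\ge0}\frac{t^{2a}}{(2a)!}E_N(a)$.

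Next I would write $\prod_{i=1}^N\cosh(tX_i)=\exp\big(\sum_{i=1}^N\log\cosh(tX_i)\big)$ as a formal power series in $t$. The elementary expansion $\log\cosh u=\sum_{l\ge1}b_l u^{2l}$, with $b_1=\tfrac12$, $b_2=-\tfrac1{12},\dots$ and $b_l\in\mathbb{Q}$ \emph{universal constants} (independent of $N$ and of $a$), gives $\sum_{i=1}^N\log\cosh(tX_i)=\sum_{l\ge1}b_l\,t^{2l}P_l$ where $P_l:=\sum_{i=1}^N X_i^{2l}$. Expanding the exponential and extracting the coefficient of $t^{2a}$ yields
\[
\frac{E_N(a)}{(2a)!}=\sum_{k\ge1}\frac{1}{k!}\sum_{\substack{l_1,\dots,l_k\ge1\\ l_1+\cdots+l_k=a}}\Big(\prod_{q=1}^k b_{l_q}\Big)\prod_{q=1}^k P_{l_q}.
\]
Each summand is $\big(\prod_q b_{l_q}\big)\prod_{q=1}^k\big(\sum_{i=1}^N X_i^{2l_q}\big)$, which is exactly of the form (\ref{Eq33}) with $k\ge1$, $l_1,\dots,l_k\ge1$ and $\sum_q l_q=a$; multiplying through by $(2a)!$ gives the asserted linear combination, the coefficient attached to $\prod_{q=1}^k\big(\sum_i X_i^{2l_q}\big)$ being $\frac{(2a)!}{k!}\prod_q b_{l_q}$, which depends only on $a$ and the $l_q$. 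Collecting the summands that share the same underlying partition $\lambda$ (the decreasing rearrangement of $(l_1,\dots,l_k)$) merely rescales these coefficients and preserves their independence of $N$.

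The only points needing verification are both elementary: (i) ``take the even part'' is multiplicative over factors in disjoint variables and sends $e^{tX_i}$ to $\cosh(tX_i)$; and (ii) $\log\cosh u$ is a power series in $u^2$ with vanishing constant term, so that the exponent is a genuine $\mathbb{Q}$-linear combination of the ``even power sums'' $t^{2l}P_l$. Neither is difficult, so there is no real obstacle — the content is just the identification of the $\cosh$/$\log\cosh$ generating function. (Equivalently and more abstractly, $E_N(a)$ is a symmetric polynomial of degree $a$ in the variables $X_i^2$, namely the specialization to $N$ variables of a fixed element of the degree-$a$ component of the ring of symmetric functions; expanding that element in the power-sum basis $\{p_\lambda\}_{\lambda\vdash a}$, whose coefficients are rational and independent of $N$, gives the claim. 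The generating-function route has the advantage of producing the coefficients explicitly via the $b_l$.)
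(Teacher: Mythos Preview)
Your proof is correct. The paper's argument is much terser and is essentially your parenthetical ``abstract'' remark: it pads with extra indeterminates so that $N\ge 2a$, observes that the even-order terms form a symmetric polynomial in $X_1^2,\ldots,X_N^2$, and then cites a reference (\cite[Proposition 2.9]{BO}) for the fact that such a polynomial can be written in the power-sum basis with coefficients independent of $N$. Your generating-function route via $\cosh$ and $\log\cosh$ is genuinely more explicit: it produces closed-form coefficients $\frac{(2a)!}{k!}\prod_q b_{l_q}$ (with $b_l$ the Taylor coefficients of $\log\cosh$), and it works uniformly for every $N\ge 1$ without the padding trick. The two approaches are equivalent in content, but yours is self-contained and constructive, while the paper's delegates the work to a standard symmetric-function fact.
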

\begin{proof}
By adding finitely many indeterminates and taking them to be $0$ in the end, we can assume without loss of generality that $N\geq 2a$.
We note that the ``even order terms'' of $(\sum_{i=1}^N X_i)^{2a}$ is a symmetric polynomial in the indeterminates $X_1^2,\cdots,X_N^2$. By the proof of \cite[Proposition 2.9]{BO}, it can be expressed as linear combination of the form (\ref{Eq33}), and the coefficients only depend on $a$.
\end{proof}

\begin{proof}[Proof of Theorem \ref{Theorem2}, eigenfunction part]
Note that $T_0^n(x)=1$ and $T_1^n(x)=\frac{n-2x}{n}$. From this and the recurrence relation (\ref{Recurrence}) it can be derived that 
for $0\leq k\leq \frac{n}{2}$, $T^n_{2k}$ only has even-order terms in $(x-\frac{n}{2})$, and $T^n_{2k-1}$ (for $k\geq 1$) only has odd-order terms in $(x-\frac{n}{2})$.

Now as $Q(i,j)=Q(i,n-j)$, we conclude that $Q[(x-\frac{n}{2})^{2k-1}]=0$ for any $1\leq k\leq \frac{n}{2}$. By the previous conclusion, $Q T_{2k-1}^n=0$. 
Hence $\{T_{2k-1}^n\}_{k=1}^{\frac{n}{2}}$ are eigenfunctions of $K$ corresponding to the zero eigenvalues.

In view of Lemma \ref{L1.1_4}, in order to show that the rest of the eigenfunctions are also given by the discrete Chebyshev polynomials, it suffices to show that for any $a$ such that $1\leq a\leq \frac{n}{2}$, $Q[(x-\frac{n}{2})^{2a}](j)$ is a polynomial in $j$ of degree $\leq 2a$ (where $j\in \{0,1,\cdots,n\}$). Below we verify this fact.

Suppose that we start from the orbit $j$. For the first step in the Burnside process, $\sigma_1\in S_j$ and $\sigma_2\in S_{n-j}$ are drawn uniformly. Let $\{a_i\}_{i=1}^n$, $\{b_i\}_{i=1}^n$ denote the number of cycles of length $i$ in $\sigma_1,\sigma_2$, respectively. For the second step, we label the entries of each cycle by $0\slash 1$ independently. Let $Z_i\sim Binomial(a_i+b_i,\frac{1}{2})$. The outcome of one iteration of the Burnside process (number of coordinates labeled by $1$) can be represented by $X=\sum_{i=1}^n iZ_i$. For $1 \leq a\leq \frac{n}{2}$, we denote by
\begin{equation}
    W_{2a}:=\mathbb{E}\Big[(X-\frac{1}{2}n)^{2a}\Big]=\mathbb{E}\Big[(X-\sum_{i=1}^n\frac{1}{2}i(a_i+b_i))^{2a}\Big].
\end{equation}
We note that
\begin{equation}
    W_{2a}=\mathbb{E}\Big[\mathbb{E}\Big[(X-\sum_{i=1}^n\frac{1}{2}i(a_i+b_i))^{2a}|\sigma_1,\sigma_2\Big]\Big].
\end{equation}
Below we denote by $\tilde{W}_{2a}(\sigma_1,\sigma_2)$ the conditional expectation inside the above expression.

By the multinomial theorem and vanishing of odd moments, we obtain that
\begin{eqnarray}\label{Eq1.1}
\tilde{W}_{2a}(\sigma_1,\sigma_2)&=&\mathbb{E}\Big[(\sum_{i=1}^n i(Z_i-\frac{1}{2}(a_i+b_i)))^{2a}|\sigma_1,\sigma_2\Big]\nonumber \\
&=&\sum_{\substack{j_1+\cdots+j_n=a,\\j_1,\cdots,j_n\geq 0}}\binom{2a}{2j_1,\cdots,2j_n}\nonumber\\
&\times&\prod_{i=1}^n i^{2j_i}\mathbb{E}\Big[(Z_i-\frac{1}{2}(a_i+b_i))^{2 j_i}|\sigma_1,\sigma_2\Big].
\end{eqnarray}
By splitting $Z_i$ into Bernoulli random variables and using the multinomial theorem again we obtain
\begin{equation}
    \mathbb{E}\Big[(Z_i-\frac{1}{2}(a_i+b_i))^{2 j_i}|\sigma_1,\sigma_2\Big]=\frac{1}{2^{2j_i}}\sum_{\substack{p_{i,1}+\cdots+p_{i,a_i+b_i}=j_i\\p_{i,1},\cdots,p_{i,a_i+b_i}\geq 0}}\binom{2j_i}{2p_{i,1},\cdots,2p_{i,a_i+b_i}}.
\end{equation}
Plugging the above expression into (\ref{Eq1.1}), we get
\begin{eqnarray}\label{Eq1.2}
\tilde{W}_{2a}(\sigma_1,\sigma_2)&=&\frac{1}{2^{2a}}\sum_{\substack{\sum\limits_{i=1}^n\sum\limits_{q=1}^{a_i+b_i}p_{i,q}=a\\p_{i,q}\geq 0}}
\prod_{i=1}^n i^{2(p_{i,1}+\cdots+p_{i,a_i+b_i})}
\nonumber\\
&\times& \binom{2a}{2p_{1,1},\cdots,2p_{1,a_1+b_1},\cdots,2p_{n,1},\cdots,2p_{n,a_n+b_n}}.
\end{eqnarray}

We note that by (\ref{Eq1.2}), $2^{2a}\tilde{W}_{2a}$ can be obtained from ``even order terms'' in 
\begin{equation}
    (\sum_{i=1}^n\sum_{q=1}^{a_i+b_i} X_{i,q})^{2a}
\end{equation}
by substituting $X_{i,q}=i$ for $1\leq q\leq a_i+b_i$. Now by lemma \ref{L1.1_5}, the ``even order terms'' of $(\sum_{i=1}^n\sum_{q=1}^{a_i+b_i} X_{i,q})^{2a}$ can be expressed in terms of the following forms
\begin{equation}\label{Eq4}
    \prod_{r=1}^k(\sum_{i=1}^n\sum_{q=1}^{a_i+b_i}X_{i,q}^{2l_r}),
\end{equation}
with $k\geq 1$, $l_1,\cdots,l_k\geq 1$ and $\sum_{r=1}^k l_r=a$. Moreover, the coefficients in the linear combination only depend on $a$. 
By substituting $X_{i,q}=i$ and taking expectation in (\ref{Eq4}), we get
\begin{equation}
    \mathbb{E}\Big[\prod_{r=1}^k(\sum_{i=1}^n i^{2l_r}(a_i+b_i))\Big].
\end{equation} 

In order to make use of Lemma \ref{L1.1_6}, we introduce the following subspaces. We take the base field to be $\mathbb{R}$. For every monomial $\alpha X_{i_1}^{d_1}\cdots X_{i_k}^{d_k}$ ($i_1<\cdots<i_k$,$\alpha\in \mathbb{R}$), we define 
\begin{equation}
    B(\alpha X_{i_1}^{d_1}\cdots X_{i_k}^{d_k}):=\alpha(d_1)!\cdots (d_k)!\binom{X_{i_1}}{d_1}\cdots\binom{X_{i_k}}{d_k},
\end{equation}
and extend linearly. We also denote by
\begin{equation}
    X(l_1,\cdots,l_k)=\prod_{r=1}^k(\sum_{i=1}^n i^{2l_r}X_i),
\end{equation}
and $B(l_1,\cdots,l_r):=B(X(l_1,\cdots,l_r))$. For any $d\geq 1$, we define $\mathcal{X}(d,a)$ to be the set of $X(l_1,\cdots,l_k)$ with $1\leq k\leq d$ and $l_1+\cdots+l_k=a$, and $\mathcal{B}(d,a)$ to be the set of $B(l_1,\cdots,l_k)$ with $1\leq k\leq d$ and $l_1+\cdots+l_k=a$. We also define $\tilde{\mathcal{X}}(d,a)$ to be the linear span of $\mathcal{X}(d,a)$ and $\tilde{\mathcal{B}}(d,a)$ to be the linear span of $\mathcal{B}(d,a)$.

Now we prove that for $l_1+\cdots+l_k=a$, $\tilde{\mathcal{X}}(k,a)=\tilde{\mathcal{B}}(k,a)$, and that it is possible to express $X(l_1,\cdots,l_k)$ in terms of elements of $\mathcal{B}(k,a)$ so that the coefficients do not depend on $n$. The method is to prove the following stronger claim by induction on $k$: for any $l_1+\cdots+l_k=a$, $\tilde{\mathcal{X}}(k,a)=\tilde{\mathcal{B}}(k,a)$, $X(l_1,\cdots,l_k)-B(l_1,\cdots,l_k)\in \tilde{\mathcal{X}}(k-1,a)$, and it is possible to express $X(l_1,\cdots,l_{k})$ in terms of elements of $\mathcal{B}(k,a)$, $B(l_1,\cdots,l_k)$ in terms of elements of $\mathcal{X}(k,a)$ and $X(l_1,\cdots,l_k)-B(l_1,\cdots,l_k)$ in terms of elements of $\mathcal{X}(k-1,a)$ so that the coefficients do not depend on $n$. When $k=1$, $X(l_1)=B(l_1)$ for any $l_1$, and the result holds. For the induction step, we assume that the claim holds for $\leq k$, and consider the $k+1$ case. We will make use of the following identity:
\begin{eqnarray*}
&&X(l_1,\cdots,l_{k+1})-B(l_1,\cdots,l_{k+1})\\
&=&\frac{1}{k+1}\sum_{j=1}^{k+1}(X(l_1,\cdots,\hat{l_j},\cdots,l_{k+1})-B(l_1,\cdots,\hat{l_j},\cdots,l_{k+1}))\\
&&\times (\sum_{i=1}^n i^{2l_j}X_i)\\
&&+\frac{1}{k+1}\sum_{j=1}^{k+1}\sum_{j'\neq j}B(l_1,\cdots,l_{j'}+l_j,\cdots,\hat{l_j},\cdots,l_{k+1}).
\end{eqnarray*}
The identity can be proved by matching individual terms on both sides. Now by the induction hypothesis, when $l_1+\cdots+l_{k+1}=a$, $X(l_1,\cdots,\hat{l_j},\cdots,l_{k+1})-B(l_1,\cdots,\hat{l_j},\cdots,l_{k+1})$ is in $\tilde{\mathcal{X}}(k-1,a-l_j)$. Therefore the first term of the right hand side is in $\tilde{\mathcal{X}}(k,a)$. The second term of the right hand side is in $\tilde{\mathcal{B}}(k,a)$, hence in $\tilde{\mathcal{X}}(k,a)$. Thus the left hand side is in $\tilde{\mathcal{X}}(k,a)=\tilde{\mathcal{B}}(k,a)$. Hence $\tilde{\mathcal{X}}(k+1,a)=\tilde{\mathcal{B}}(k+1,a)$. The last conclusion can also be checked from the identity. 

Now $\mathbb{E}\Big[\prod_{r=1}^k(\sum_{i=1}^n i^{2l_r}(a_i+b_i))\Big]$ can be expanded in terms of the forms (coefficients do not depend on $n,j$)
\begin{equation}
\mathbb{E}\Big[\prod_{r\in \Gamma}(\sum_{i=1}^n i^{2l_r}a_i)\Big]\mathbb{E}\Big[\prod_{r\in [k]\backslash\Gamma}(\sum_{i=1}^n i^{2l_r}b_i)\Big]
\end{equation}
where $\Gamma\subseteq [k]$. By the above result, the first factor can be expressed in terms of elements in $\mathcal{B}(|\Gamma|,\sum_{r\in\Gamma} l_r)$, and similarly for the second factor. 

Now we denote by $B'(l_1,\cdots,l_k)$ the result obtained by substituting $X_i=a_i$ in $B(l_1,\cdots,l_k)$, and $B''(l_1,\cdots,l_k)$ similar with $X_i=b_i$. Note that by Lemma \ref{L1.1_6},
\begin{equation}
    \mathbb{E}[B'(l_1,\cdots,l_k)]=\sum_{i_1+\cdots+i_k\leq j}i_1^{2l_1-1}\cdots i_k^{2l_k-1}.
\end{equation}
It can be shown by induction on $k$ that the above expression is a polynomial in $j$ with degree $2(l_1+\cdots+l_k)$. Similarly, it can be shown that $\mathbb{E}[B''(l_1,\cdots,l_k)]$ is a polynomial in $(n-j)$ of degree $2(l_1+\cdots+l_k)$, hence it is also a polynomial of the same degree in $j$ with coefficients depending on $n$ (but the coefficient of highest degree does not depend on $n$). Putting these together, we conclude that $\mathbb{E}\Big[\prod_{r=1}^k(\sum_{i=1}^n i^{2l_r}(a_i+b_i))\Big]$ is a polynomial of degree $\leq 2a$ in $j$, and the coefficient of its degree $2a$ term does not depend on $n$. 

We conclude that $W_{2a}$ is a polynomial of degree $\leq  2a$ in $j$, and its coefficient of degree $2a$ does not depend on $n$ for $n\geq 2a$. This implies that the coefficient of degree $2a$ of $\mathbb{E}[X^{2a}]$ does not depend on $n$ for $n\geq 2a$. By Lemma \ref{L1.1_4}, the eigenfunctions of $K$ are given by the discrete Chebyshev polynomials, and the eigenvalue corresponding to $T^n_{2k}$ does not depend on $n$ as long as $n\geq 2k$.
\end{proof}

Now we present the proof for the eigenvalues. We make use of three lemmas in the proof, which we also present below. The first gives needed inequalities for the gamma function. For many further references, see \cite{Gor}. 

\begin{lemma}[Explicit Stirling approximation]\label{L1.1_1}
For any $x>0$,
\begin{equation}
    \sqrt{\pi}(\frac{x}{e})^x(8x^3+4x^2+x+\frac{1}{100})^{\frac{1}{6}}<\Gamma(1+x)<\sqrt{\pi}(\frac{x}{e})^x(8x^3+4x^2+x+\frac{1}{30})^{\frac{1}{6}}.
\end{equation}
From this it can be easily derived
that for any $x\geq 1$,
\begin{equation}
    \sqrt{2\pi}(\frac{x}{e})^x\sqrt{x}<\Gamma(1+x)<\sqrt{\pi}(\frac{x}{e})^x\sqrt{2x}(1+\frac{1}{x})^{\frac{1}{6}}.
\end{equation}
\end{lemma}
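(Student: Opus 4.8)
The plan is to reduce the double inequality to a single fully explicit scalar inequality and then dispatch it by a range split. Raising the claimed bounds to the sixth power, the lemma is equivalent to
\[
\frac{1}{100}<F(x)<\frac{1}{30}\quad\text{for all }x>0,\qquad F(x):=\left(\frac{\Gamma(1+x)}{\sqrt{\pi}\,(x/e)^{x}}\right)^{6}-8x^{3}-4x^{2}-x.
\]
Introduce Binet's function $\mu(x):=\log\Gamma(1+x)-(x+\tfrac12)\log x+x-\tfrac12\log(2\pi)$ (this is the usual $\mu$, since $\log\Gamma(1+x)=\log x+\log\Gamma(x)$), so $\Gamma(1+x)=\sqrt{2\pi x}\,(x/e)^{x}e^{\mu(x)}$ and $F(x)=8x^{3}\bigl(e^{6\mu(x)}-1\bigr)-4x^{2}-x$. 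Stirling's series gives $F(0^{+})=\pi^{-3}\approx0.0322$ and $\lim_{x\to\infty}F(x)=\tfrac1{30}$, with $F(x)=\tfrac1{30}-\tfrac{11}{240x}+O(x^{-2})$. A crucial preliminary point is that $F$ is \emph{not} monotone: it rises slightly above $\pi^{-3}$ for very small $x$ (a local maximum near $x\approx0.008$), dips to a global minimum only a little above $\tfrac1{100}$ near $x\approx0.7$, and then increases back to $\tfrac1{30}$. Hence both constants are essentially sharp, and $F$ must be controlled precisely near two interior near-equality points.

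For large $x$ I would use the enveloping property of Stirling's series, valid for \emph{all} $x>0$: the partial sums of $\sum_{k\ge1}\frac{B_{2k}}{2k(2k-1)}x^{-(2k-1)}$ bracket $\mu(x)$ alternately, so e.g.\ $\frac1{12x}-\frac1{360x^{3}}<\mu(x)<\frac1{12x}$, and one may keep as many terms as needed. Feeding such two-sided bounds into $F(x)=8x^{3}(e^{6\mu(x)}-1)-4x^{2}-x$ and using Taylor's theorem with Lagrange remainder for the exponential yields explicit rational functions $L(x)<F(x)<U(x)$ on an interval $[x_{2},\infty)$, with $L(x),U(x)\to\tfrac1{30}$; it then suffices to check the elementary inequalities $L(x)>\tfrac1{100}$ and $U(x)<\tfrac1{30}$ for $x\ge x_{2}$. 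This part is routine.

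The real work is the bounded range $0<x\le x_{2}$, where the powers of $1/x$ in the envelope are worthless near $0$. Here I would climb the factorial recursion: for a fixed large integer $m$, $\Gamma(1+x)=\Gamma(x+m+1)/\prod_{j=1}^{m}(x+j)$, and since $x+m+1$ is large the sharp Stirling bounds pin $\Gamma(x+m+1)$ down tightly, so $F$ on $(0,x_{2}]$ gets sandwiched between explicit elementary functions (transcendental in $x$ only through $x^{x}$). One then verifies $\tfrac1{100}<F(x)<\tfrac1{30}$ on this compact interval by subdividing into finitely many subintervals and, on each, comparing the explicit upper/lower enclosures with the two constants, subdividing further wherever an enclosure is too loose; the limit $x\to0^{+}$ causes no trouble since the enclosure is smooth there (and one recovers $F(0^{+})=\pi^{-3}$). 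The step I expect to be the main obstacle is precisely this bounded-range verification: the global minimum of $F$ exceeds $\tfrac1{100}$ by only about $10^{-3}$ (near $x\approx0.7$) and the local maximum near $x\approx0.008$ is within roughly $10^{-4}$ of $\tfrac1{30}$, so every estimate there must be genuinely sharp --- no Stirling term can be discarded, and near the two near-equality points a rigorous numerical treatment is needed (for instance a convergent expansion of $\log\Gamma$ about $x=1$ or $x=2$ via $\psi$ and $\psi'$ at integers, or validated interval arithmetic). In effect this lemma is essentially Karatsuba's sharpening of Ramanujan's factorial approximation, and the scheme above merely organizes that estimate.

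Finally, the two displayed consequences for $x\ge1$ are elementary: since $(8x^{3})^{1/6}=\sqrt{2x}$,
\[
\sqrt{2\pi}\,(x/e)^{x}\sqrt{x}=\sqrt{\pi}\,(x/e)^{x}(8x^{3})^{1/6}\le\sqrt{\pi}\,(x/e)^{x}\bigl(8x^{3}+4x^{2}+x+\tfrac1{100}\bigr)^{1/6}<\Gamma(1+x),
\]
while
\[
\Gamma(1+x)<\sqrt{\pi}\,(x/e)^{x}\bigl(8x^{3}+4x^{2}+x+\tfrac1{30}\bigr)^{1/6}\le\sqrt{\pi}\,(x/e)^{x}(8x^{3}+8x^{2})^{1/6}=\sqrt{\pi}\,(x/e)^{x}\sqrt{2x}\,(1+\tfrac1x)^{1/6},
\]
the middle inequality holding because $4x^{2}\ge x+\tfrac1{30}$ for $x\ge1$.
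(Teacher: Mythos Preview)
The paper does not prove this lemma at all: it is stated as a known result (``Explicit Stirling approximation'') with a pointer to \cite{Gor} for references, and the text immediately moves on to Clausen's formula. So there is no ``paper's own proof'' to compare against; you have supplied an independent argument where the authors simply cite the literature.

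Your outline is correct in structure and correctly identifies the inequality as the Ramanujan--Karatsuba refinement of Stirling's formula. The reduction to $\tfrac{1}{100}<F(x)<\tfrac{1}{30}$ is right, the use of the enveloping Stirling series for the large-$x$ regime is standard and works, and your remark that the delicate part is the bounded range (where the extremal values of $F$ sit close to the two constants) is accurate. However, what you have written is a detailed plan rather than a proof: the bounded-range step is left at the level of ``subdivide and use validated numerics,'' which is honest but not a self-contained argument. If the goal were to actually include a proof in the paper, this portion would need to be carried out (or, as the authors do, replaced by a citation to Karatsuba or Mortici).

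Your derivation of the second display from the first is complete and correct: the lower bound follows from $8x^{3}+4x^{2}+x+\tfrac{1}{100}>8x^{3}$, and the upper bound from $4x^{2}\ge x+\tfrac{1}{30}$ for $x\ge 1$, giving $8x^{3}+4x^{2}+x+\tfrac{1}{30}\le 8x^{3}(1+\tfrac{1}{x})$. This matches the paper's assertion that the second inequality ``can be easily derived'' from the first.
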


\begin{lemma}[Clausen \cite{Cla}, see also \cite{M}]\label{L1.1_2}
\begin{equation}
    ~_3F_2(2a,2b,a+b; a+b+\frac{1}{2},2a+2b|x)=({}_2F_1(a,b;a+b+\frac{1}{2}|x))^2.
\end{equation}
\end{lemma}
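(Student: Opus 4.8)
The plan is to prove this by comparing the two power series in $x$ coefficient by coefficient; since the statement is Clausen's classical quadratic identity one could alternatively just invoke \cite{Cla} or \cite{M}, but a self-contained argument is short. Write $c=a+b+\tfrac12$; the single place the hypothesis is felt is that $2a+2b=2c-1$. Put $f_k=\dfrac{(a)_k(b)_k}{(c)_k\,k!}$, so that ${}_2F_1(a,b;c\,|\,x)=\sum_{k\ge0}f_k x^k$ and, by the Cauchy product,
\begin{equation*}
\big({}_2F_1(a,b;c\,|\,x)\big)^2=\sum_{n\ge0}B_n\,x^n,\qquad B_n=\sum_{k=0}^n f_k f_{n-k}.
\end{equation*}
The left side of the lemma is $\sum_{n\ge0}A_n x^n$ with $A_n=\dfrac{(2a)_n(2b)_n(a+b)_n}{(c)_n(2c-1)_n\,n!}$. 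So it is enough to show $A_n=B_n$ for every $n\ge0$, an identity of rational functions of $a,b$ for each fixed $n$.

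For this I would argue by induction on $n$, the base case $n=0$ being $A_0=B_0=1$. From the Pochhammer ratios one reads off that $A_n$ satisfies the first-order recurrence
\begin{equation*}
(n+1)(n+c)(n+2c-1)\,A_{n+1}=(n+2a)(n+2b)(n+a+b)\,A_n .
\end{equation*}
The crux is to verify that $B_n$ satisfies this very same recurrence. Using the contiguity relation $(k+1)(k+c)f_{k+1}=(k+a)(k+b)f_k$ for the coefficients of ${}_2F_1$, one expands $(n+1)(n+c)(n+2c-1)B_{n+1}-(n+2a)(n+2b)(n+a+b)B_n$, substitutes these relations, and rearranges the resulting double sum over $k$ into a telescoping sum whose boundary terms vanish; this rearrangement is exactly where $2c-1=2a+2b$ must be used. (This step is a creative-telescoping identity for the Cauchy square of a ${}_2F_1$ and could also be certified by the WZ method.) With both sequences obeying the same first-order recurrence and agreeing at $n=0$, induction gives $A_n=B_n$, hence the lemma.

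A more conceptual route, which hides the telescoping inside a routine ODE computation, runs as follows. The function $F:={}_2F_1(a,b;c\,|\,x)$ is annihilated by the hypergeometric operator $\theta(\theta+c-1)-x(\theta+a)(\theta+b)$, where $\theta=x\,\frac{d}{dx}$. Consequently $F^2$ is annihilated by the third-order ``symmetric square'' of this operator; carrying out that computation and using $c=a+b+\tfrac12$ one finds the symmetric square to be exactly
\begin{equation*}
\theta(\theta+c-1)(\theta+2c-2)-x(\theta+2a)(\theta+2b)(\theta+a+b),
\end{equation*}
which is precisely the hypergeometric operator whose holomorphic-at-$0$ solution is ${}_3F_2(2a,2b,a+b;c,2c-1\,|\,x)$. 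Both $F^2$ and this ${}_3F_2$ are holomorphic at $x=0$ with value $1$ there (indeed both have $x$-coefficient $2ab/c$), and for generic $a,b$ the exponents of the equation at the regular singular point $0$ are $0,\,1-c,\,2-2c$, with $0$ simple and no two differing by an integer, so the holomorphic solution is unique up to a scalar; hence $F^2={}_3F_2$, and the case of general $a,b>0$ follows by continuity in the parameters (the coefficient identity $A_n=B_n$ being, after all, a polynomial identity in $a,b$).

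In either approach the sole real obstacle is the parameter bookkeeping forced by $c=a+b+\tfrac12$: in the first route, checking that $(n+1)(n+c)(n+2c-1)B_{n+1}-(n+2a)(n+2b)(n+a+b)B_n$ telescopes to zero; in the second, checking that the symmetric square of the ${}_2F_1$ operator is literally the displayed third-order operator. This is a finite but somewhat delicate manipulation of Pochhammer symbols; everything else --- extracting coefficients, uniqueness of the holomorphic solution of a Fuchsian equation at a regular point, and the continuity/polynomiality argument for non-generic parameters --- is routine.
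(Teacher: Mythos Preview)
The paper does not prove this lemma at all: it is stated as a classical result attributed to Clausen with references \cite{Cla} and \cite{M}, and is simply invoked later to evaluate ${}_3F_2(\tfrac12,-2k,2k+1;1,1\,|\,1)$ as a square. So there is nothing in the paper to compare against; your proposal goes well beyond what the paper offers.

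Both of your outlined arguments are the standard routes to Clausen's identity and are correct in strategy. The coefficient-recurrence approach reduces the problem to a Zeilberger-type telescoping for the Cauchy convolution $B_n$; you are right that this is where $2c-1=2a+2b$ enters and that WZ certifies it mechanically, though you have not actually exhibited the telescoping certificate. The ODE approach---that the symmetric square of the second-order hypergeometric operator with $c=a+b+\tfrac12$ collapses to the third-order ${}_3F_2$ operator---is the conceptually cleanest proof and your description of it is accurate, including the exponent analysis at $x=0$ and the genericity/continuity step to pass from polynomial identities in $a,b$ to all parameters. Neither sketch has a gap of substance; each leaves one honest computation (the telescope, or the symmetric-square expansion) to the reader, which is appropriate for a result you could also simply cite.
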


\begin{lemma}[Gauss's hypergeometric theorem, see Page 2 of \cite{Bailey}]\label{L1.1_3}
For $a,b,c\in\mathbb{R}$ such that $a+b<c$, we have
\begin{equation}
    {}_2F_1(a,b;c|1)=\frac{\Gamma(c)\Gamma(c-a-b)}{\Gamma(c-a)\Gamma(c-b)}.
\end{equation}
\end{lemma}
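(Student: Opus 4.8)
The plan is to reduce Gauss's formula to a beta integral via the Euler integral representation of ${}_2F_1$, then pass to the boundary $z=1$. First, in the restricted parameter range $c>b>0$ and for $0\le z<1$, I would establish
\begin{equation*}
{}_2F_1(a,b;c|z)=\frac{\Gamma(c)}{\Gamma(b)\Gamma(c-b)}\int_0^1 t^{b-1}(1-t)^{c-b-1}(1-zt)^{-a}\,dt.
\end{equation*}
This comes from expanding $(1-zt)^{-a}=\sum_{n\ge0}\frac{(a)_n}{n!}(zt)^n$ (uniformly convergent on $t\in[0,1]$ since $zt\le z<1$), integrating term by term, using $\int_0^1 t^{b+n-1}(1-t)^{c-b-1}\,dt=\Gamma(b+n)\Gamma(c-b)/\Gamma(c+n)$, and noting the prefactor converts the $n$th summand to $\frac{(a)_n(b)_n}{(c)_n\,n!}z^n$, which is the hypergeometric series.

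Next I would let $z\uparrow1$. By hypothesis $c-a-b>0$, so $\sum_n\frac{(a)_n(b)_n}{(c)_n n!}$ converges absolutely and Abel's theorem identifies the left side's limit with ${}_2F_1(a,b;c|1)$; on the right, the $z=1$ integrand $t^{b-1}(1-t)^{c-a-b-1}$ is integrable near $t=1$ precisely because $c-a-b>0$, so monotone convergence moves the limit inside. This yields
\begin{equation*}
{}_2F_1(a,b;c|1)=\frac{\Gamma(c)}{\Gamma(b)\Gamma(c-b)}\int_0^1 t^{b-1}(1-t)^{c-a-b-1}\,dt=\frac{\Gamma(c)}{\Gamma(b)\Gamma(c-b)}\cdot\frac{\Gamma(b)\Gamma(c-a-b)}{\Gamma(c-a)},
\end{equation*}
which collapses to $\Gamma(c)\Gamma(c-a-b)/(\Gamma(c-a)\Gamma(c-b))$ as claimed; note $c>b>0$ together with $c>a+b$ forces $c-a>b>0$, so every Gamma factor appearing is finite. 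To remove the auxiliary restriction $c>b>0$ and obtain the identity for all real $a,b,c$ with $c>a+b$, I would appeal to analytic continuation in the parameters: both sides are meromorphic and agree on an open set, hence everywhere both are defined. An alternative that covers all real parameters directly is Gauss's telescoping: the three-term contiguous relation linking ${}_2F_1(a,b;c-1|z)$, ${}_2F_1(a,b;c|z)$, ${}_2F_1(a,b;c+1|z)$ carries a factor $(z-1)$ on the $c-1$ term, so at $z=1$ it reduces to the first-order recursion $F(c+1)=\frac{c(c-a-b)}{(c-a)(c-b)}F(c)$ with $F(c):={}_2F_1(a,b;c|1)$; the proposed right-hand side $g(c):=\Gamma(c)\Gamma(c-a-b)/(\Gamma(c-a)\Gamma(c-b))$ satisfies the same recursion, so $F/g$ is $1$-periodic in $c$, and since $F(c+N)\to1$ termwise while $g(c+N)\to1$ by Stirling's formula (e.g.\ Lemma \ref{L1.1_1}) as $N\to\infty$, one concludes $F\equiv g$.

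I expect the only genuine work to be the limit interchange as $z\uparrow1$ — which is exactly where the sharp hypothesis $c>a+b$ enters, both to guarantee absolute convergence of the series and integrability of the limiting integrand — or, should one prefer the fully self-contained route, the derivation of the contiguous relation and the verification that its $z=1$ specialization is the clean recursion displayed above; the remaining manipulations are routine beta-function and Gamma-function algebra.
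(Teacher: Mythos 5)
Your argument is correct, but note that the paper does not prove this lemma at all: it is quoted as a classical result with a citation to Bailey, so there is no in-paper proof to compare against. What you give is the standard textbook derivation (Euler's integral representation plus Abel's theorem, with Gauss's contiguous-relation recursion as a backup), and it is essentially the proof found in Bailey. Two small points are worth tightening. First, the passage $z\uparrow 1$ inside the integral is monotone only when $a>0$; for $a\le 0$ you should invoke dominated convergence instead, using the bound $(1-zt)^{-a}\le 1$ and the integrable majorant $t^{b-1}(1-t)^{c-b-1}$. Second, the analytic-continuation step to remove the restriction $c>b>0$ does require knowing that $(a,b,c)\mapsto{}_2F_1(a,b;c|1)$ is jointly holomorphic on $\{\mathrm{Re}(c-a-b)>0\}$ minus the poles of $1/(c)_n$, which follows from locally uniform convergence of the series but should be stated; this extension is genuinely needed here, since the paper applies the lemma with $a=-k$, $b=k+\tfrac12$, $c=1$, where $c>b$ fails for $k\ge 1$. (In that terminating case the identity reduces to the Chu--Vandermonde sum ${}_2F_1(-k,b;c|1)=(c-b)_k/(c)_k$, which also admits a short induction proof if one wants to avoid continuation arguments entirely.)
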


\begin{proof}[Proof of Theorem \ref{Theorem2}, eigenvalue part]
We use the notation as in the statement of Theorem \ref{Theorem2}. Let $\lambda_{k,n}$ be the eigenvalue corresponding to the eigenfunction $T_{2k}^n$ for the Markov chain $Q(i,j)$ on $\{0,1,\cdots,n\}$. Note that from the determination of eigenfunctions, $\lambda_{k,n}$ is constant for all $n\geq 2k$, and we denote it by $\lambda_k$. The strategy of the proof is to analyze the expression for $\lambda_k$ in a system of size $n$ and let $n\rightarrow \infty$ to get the desired result.

The eigenfunction that corresponds to $\lambda_k$ has been proven previously to be the discrete Chebyshev polynomials $T_{2k}^n$. Thus by examining the first row of the eigenvalue-eigenfunction equation, we obtain that (see Proposition \ref{Pro} and \cite[(3.1)-(3.3)]{D})
\begin{equation}
    \lambda_{k,n}=\frac{\sum_{j=0}^n \alpha_{j}^n T_{2k}^n(j)}{T^n_{2k}(0)}=\sum_{j=0}^n \alpha_{j}^n T_{2k}^n(j),
\end{equation}
where $\alpha_j^n=\frac{\binom{2j}{j}\binom{2(n-j)}{n-j}}{2^{2n}}$.

Plugging the expression
\begin{equation}
    T_{2k}^n(j)=\sum_{l=0}^{2k}\frac{(-2k)_l(2k+1)_l(-j)_l}{(l!)^2(-n)_l}
\end{equation}
into the expression above, we obtain that
\begin{equation}\label{Eq1.1_11}
    \lambda_{k}=\sum_{l=0}^{2k}\frac{(-2k)_l(2k+1)_l}{ (l!)^2}\sum_{j=0}^n\frac{(-j)_l\alpha_{j}^n}{(-n)_l}.
\end{equation}

Below we prove that 
\begin{equation}\label{Eq1.1_1}
    \lim_{n\rightarrow\infty}\sum_{j=0}^n\frac{(-j)_l\alpha^n_j}{(-n)_l}=\frac{(\frac{1}{2})_l}{l!}.
\end{equation}
We fix a small $c>0$, and assume that $n$ is sufficiently large below. Note that by Lemma \ref{L1.1_1}, we can derive that for $n$ sufficiently large,
\begin{equation}
|\sum_{j\leq cn}\frac{(-j)_l\alpha^n_j}{(-n)_l}|\leq \sum_{j\leq cn}\frac{\binom{2j}{j}\binom{2(n-j)}{n-j}}{2^{2n}}\leq \frac{2}{\sqrt{n}}+\sum_{1\leq j\leq cn}\frac{4}{\sqrt{j}\sqrt{n-j}}\leq \frac{10\sqrt{c}}{\sqrt{1-c}}.
\end{equation}
Similarly,
\begin{equation}
    |\sum_{(1-c)n \leq j\leq n}\frac{(-j)_l\alpha^n_j}{(-n)_l}|\leq \frac{10\sqrt{c}}{\sqrt{1-c}}.
\end{equation}
Now note that for $n$ sufficiently large, when $cn < j<(1-c)n$, we have $(\frac{j}{n})^l(1-\frac{l+1}{cn})^l\leq \frac{(-j)_l}{(-n)_l} \leq (\frac{j}{n})^l$. Moreover, using Lemma \ref{L1.1_1}, we obtain that $\frac{1}{\pi}\frac{1}{\sqrt{j}\sqrt{n-j}}(1+\frac{6}{cn})^{-\frac{2}{3}}\leq\alpha^n_j\leq \frac{1}{\pi}\frac{1}{\sqrt{j}\sqrt{n-j}}(1+\frac{6}{c n})^{\frac{1}{3}}$ for $cn <j<(1-c)n$. Therefore we have
\begin{equation}
    \sum_{cn<j<(1-c)n}\frac{(-j)_l\alpha^n_j}{(-n)_l}\leq (1+\frac{6}{cn})^{\frac{1}{3}}\frac{1}{\pi}\sum_{cn <j<(1-c)n}\frac{1}{\sqrt{\frac{j}{n}}\sqrt{1-\frac{j}{n}}}(\frac{j}{n})^l\frac{1}{n},
\end{equation}
\begin{equation}
    \sum_{cn<j<(1-c)n}\frac{(-j)_l\alpha^n_j}{(-n)_l}\geq (1+\frac{6}{cn})^{-\frac{2}{3}}(1-\frac{l+1}{cn})^{l}\frac{1}{\pi}\sum_{cn <j <(1-c)n}\frac{1}{\sqrt{\frac{j}{n}}\sqrt{1-\frac{j}{n}}}(\frac{j}{n})^l\frac{1}{n}.
\end{equation}
Now note that
\begin{equation}
    \lim_{n\rightarrow\infty}\frac{1}{\pi}\sum_{cn<j<(1-c)n}\frac{1}{\sqrt{\frac{j}{n}}\sqrt{1-\frac{j}{n}}}(\frac{j}{n})^l\frac{1}{n}=\frac{1}{\pi}\int_{c}^{1-c}\frac{x^l}{\sqrt{x}\sqrt{1-x}}.
\end{equation}
Hence we have
\begin{eqnarray*}
    &&\frac{1}{\pi}\int_{c}^{1-c}\frac{x^l}{\sqrt{x}\sqrt{1-x}}-\frac{20\sqrt{c}}{\sqrt{1-c}}\leq \liminf_{n\rightarrow\infty}\sum_{j=0}^n\frac{(-j)_l\alpha^n_j}{(-n)_l}\\
    &\leq& \limsup_{n\rightarrow\infty}\sum_{j=0}^n\frac{(-j)_l\alpha^n_j}{(-n)_l}\leq  \frac{1}{\pi}\int_{c}^{1-c}\frac{x^l}{\sqrt{x}\sqrt{1-x}}+\frac{20\sqrt{c}}{\sqrt{1-c}}.
\end{eqnarray*}
Sending $c\rightarrow 0$ gives
\begin{equation}
    \lim_{n\rightarrow\infty}\sum_{j=0}^n\frac{(-j)_l\alpha^n_l}{(-n)_l}=\frac{1}{\pi}\int_{0}^1\frac{x^l}{\sqrt{x}\sqrt{1-x}}=\frac{(\frac{1}{2})_l}{l!}.
\end{equation}

Now in (\ref{Eq1.1_11}) we take $n\rightarrow\infty$ and get
\begin{equation}
    \lambda_k=\sum_{l=0}^{2k}\frac{(-2k)_l(2k+1)_l(\frac{1}{2})_l}{(l!)^3}=~_3F_2(\frac{1}{2},-2k,2k+1;1,1|1).
\end{equation}
Take $a=-k$ and $b=k+\frac{1}{2}$ in Lemma \ref{L1.1_2}, we obtain that
\begin{equation}
    \lambda_k=({}_2F_1(-k,k+\frac{1}{2};1|1))^2.
\end{equation}
Finally, by Lemma \ref{L1.1_3}, 
\begin{equation}
    {}_2F_1(-k,k+\frac{1}{2};1|1)=\frac{\Gamma(1)\Gamma(\frac{1}{2})}{\Gamma(k+1)\Gamma(\frac{1}{2}-k)}=\frac{(-1)^k 1\cdot 3\cdot\cdots\cdot (2k-1) }{2\cdot 4\cdot\cdots\cdot (2k)}.
\end{equation}
Hence 
\begin{equation}
    \lambda_k=\frac{\binom{2k}{k}^2}{2^{4k}}
\end{equation}
\end{proof}

\subsection{Proof of Theorem \ref{Theorem1}}
In this part, we prove Theorem \ref{Theorem1} based on Theorem \ref{Theorem2}.

\begin{proof}[Proof of Theorem \ref{Theorem1}]
First note that for the starting state $x_0=(1,1,\cdots,1)$, by $S_n$-invariance of $K^l(x,y)$ and $\pi(x)$ for any $x,y,l$, we have
\begin{equation*}
    \|K_{x_0}^l-\pi\|=\frac{1}{2}\sum_{y\in\mathcal{X}} |K_{x_0}^l(y)-\pi(y)|=\frac{1}{2}\sum_{j=0}^n|Q_n^l(j)-u(j)|=\|Q_n^l-u\|,
\end{equation*}
where $u(j)=\frac{1}{n+1},0\leq j\leq n$.

Without loss of generality suppose $n$ is even. The analysis for odd $n$ is similar. 
By Theorem \ref{Theorem2}, 
\begin{equation}
    \lambda_k=\frac{\binom{2k}{k}^2}{2^{4k}}.
\end{equation}
By Lemma \ref{L1.1_1}, we obtain that
\begin{equation}
    \lambda_k\leq \frac{1}{\pi k}(1+\frac{1}{2k})^{\frac{1}{3}}.
\end{equation}

First we show the lower bound. By Proposition \ref{Lower}, taking $\beta=\lambda_1=\frac{1}{4}$ and $\psi=T_2^n$, we obtain
\begin{equation}
    \|Q_n^l-u\| \geq \frac{1}{4}(\frac{1}{4})^l.
\end{equation}

Now we show the upper bound. For $l=1$ the upper bound follows by the fact that total variation distance is always upper bounded by $1$. For $l\geq 2$,
\begin{equation}
    4\|Q_n^l-u\|^2\leq \chi_n^2(l)=\sum_{k=1}^{\frac{n}{2}}\lambda_k^{2l}\beta_{2k}(4k+1),
\end{equation}
where $\beta_{2k}\leq \frac{n}{n+2}\leq 1$ (see \cite[Section 2.5]{DKS} for the estimates related to Hahn polynomials that are used above). Therefore we obtain that 
\begin{eqnarray*}
\chi_n^2(l)&\leq& 5\sum_{k=1}^{\frac{n}{2}}k \lambda_k^{2l}
\leq 15(\frac{1}{16})^l+\sum_{k=3}^{\frac{n}{2}}k(\frac{2}{\pi k})^{2l}\\
&\leq& 15(\frac{1}{16})^l+27(\frac{2}{3\pi})^{2l}\sum_{k=1}^{\infty}\frac{1}{k^2}\leq 60(\frac{1}{16})^l.
\end{eqnarray*}
Hence we conclude that
\begin{equation}
    \|Q_n^l-u\|\leq 4(\frac{1}{4})^l
\end{equation}
for any $l\geq 2$.

Therefore for any $l\in\{1,2,\cdots\}$, 
\begin{equation}
    \frac{1}{4}(\frac{1}{4})^l \leq \|K_{x_0}^l-\pi\|\leq 4(\frac{1}{4})^l. 
\end{equation}
\end{proof}

\section{The twisted Burnside process}\label{Sect.4}
In this section, we consider a generalization of the original Burnside process, which we will call ``twisted Burnside process''. This will allow deformation of the examples above to give a natural process with a larger family of Hahn polynomials as eigenfunctions. Several further examples are presented. We believe that it gives many new examples of easy to run, rapidly converging Markov chains with tunable stationary distributions. 

The setting is the same: we have a finite group $G$ acting on a finite set $\mathcal{X}$; for $x\in\mathcal{X}$, let $G_{x}=\{g\in G:x^g=x\}$; for $g\in G$, let $\mathcal{X}_g=\{x\in \mathcal{X}: x^g=x\}$.

We choose a positive weight $w$ on the group $G$ and let $W(x)$ be the sum of $w(g)$ for $g\in G_x$. We also choose a positive weight $v$ on the set $\mathcal{X}$ and let $V(g)$ be the sum of $v(x)$ for $x\in \mathcal{X}_g$. The new Markov chain is: from $x$, choose $g\in G_x$ with probability $\frac{w(g)}{W(x)}$; given $g$, choose $y\in\mathcal{X}_g$ with probability $\frac{v(y)}{V(g)}$; the chain goes from $x$ to $y$.

Proposition \ref{Pro1} below gives the stationary distribution of the twisted Burnside process.

\begin{proposition}\label{Pro1}
The twisted Burnside process as discussed in the preceding is a reversible Markov chain with stationary distribution 
\begin{equation}
    \pi(x)\propto W(x)v(x)
\end{equation}
for $x\in\mathcal{X}$.

Moreover, if $w$ is constant on each conjugacy class of $G$ and $v$ is constant on each orbit of $\mathcal{X}$ (under the action of $G$), then the chain can be lumped onto orbits of $\mathcal{X}$. For $x\in\mathcal{X}$, let $O_x$ denote the orbit containing $x$. The lumped chain is a reversible Markov chain with stationary distribution
\begin{equation}
    \tilde{\pi}(O_x) \propto \frac{W(x)v(x)}{|G_x|}.
\end{equation}
\end{proposition}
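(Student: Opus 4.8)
The plan is to verify reversibility directly from the detailed balance equations and then read off the stationary distribution. First I would write down the transition kernel of the twisted Burnside process explicitly: for $x,y\in\mathcal{X}$,
\begin{equation*}
    K(x,y)=\sum_{g\in G_x\cap G_y}\frac{w(g)}{W(x)}\cdot\frac{v(y)}{V(g)},
\end{equation*}
noting that $g$ contributes to the transition from $x$ to $y$ precisely when $g$ fixes both $x$ and $y$, i.e.\ $g\in G_x\cap G_y=\mathcal{X}_g\ni x,y$ rephrased group-side. Then I would compute $W(x)v(x)K(x,y)$ and observe
\begin{equation*}
    W(x)v(x)K(x,y)=\sum_{g\in G_x\cap G_y}\frac{w(g)v(x)v(y)}{V(g)},
\end{equation*}
which is manifestly symmetric in $x$ and $y$. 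Hence $\pi(x)\propto W(x)v(x)$ satisfies $\pi(x)K(x,y)=\pi(y)K(y,x)$, so the chain is reversible with this stationary distribution (after normalizing by $Z=\sum_x W(x)v(x)$, which is finite and positive since $w,v$ are positive weights on finite sets). One should also note in passing that $\sum_y K(x,y)=1$ follows since $\sum_{y\in\mathcal{X}_g}\frac{v(y)}{V(g)}=1$ for each $g$ and $\sum_{g\in G_x}\frac{w(g)}{W(x)}=1$.

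For the lumped chain, the plan is to invoke the standard lumping (or ``collapsing'') criterion for Markov chains: if there is a partition of the state space such that $\sum_{y\in B}K(x,y)$ depends only on the block containing $x$ (not on $x$ itself within its block), then the block process is Markov with those aggregated transition probabilities. Here the blocks are the $G$-orbits $O_x$. Under the hypotheses that $w$ is a class function on $G$ and $v$ is constant on orbits, I would argue that $G$ acts on everything compatibly: for $h\in G$, the map $g\mapsto h^{-1}gh$ carries $G_x$ bijectively onto $G_{x^h}$, preserves $w$ (class function), and carries $\mathcal{X}_g$ onto $\mathcal{X}_{h^{-1}gh}$ with $V(g)=V(h^{-1}gh)$ (using that $v$ is orbit-constant); consequently $W(x)=W(x^h)$ and $K(x^h,y^h)=K(x,y)$. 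From this $G$-equivariance, $\sum_{y\in O_z}K(x,y)$ depends only on $O_x$ and $O_z$, so lumping is valid. The lumped stationary distribution is $\tilde\pi(O_x)=\sum_{y\in O_x}\pi(y)=|O_x|\,\pi(x)\propto |O_x|W(x)v(x)$; then the orbit–stabilizer identity $|O_x|=|G|/|G_x|$ converts this to $\tilde\pi(O_x)\propto W(x)v(x)/|G_x|$, as claimed. Reversibility of the lumped chain follows because lumping a reversible chain along a partition into which $\pi$ restricts compatibly yields a reversible chain, or one can simply re-verify detailed balance for $\tilde\pi$ and the aggregated kernel directly.

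The main obstacle is not any single computation — each step is short — but rather pinning down the lumping step cleanly: one must be careful that the lumpability criterion genuinely holds, which is exactly where the two hypotheses ($w$ a class function, $v$ orbit-constant) are used, and it is worth spelling out the conjugation bijection $g\mapsto h^{-1}gh$ and checking it respects all four quantities $w$, $W$, $v$, $V$. A secondary point to handle with care is the normalization and the passage via orbit–stabilizer, and the implicit assumption that $\mathcal{X}_g\neq\emptyset$ for the relevant $g$ so that $V(g)>0$ (automatic here since $g\in G_x$ forces $x\in\mathcal{X}_g$). I would present the reversibility computation as the core of the proof and treat the lumping as a corollary of $G$-equivariance plus the standard collapsing lemma.
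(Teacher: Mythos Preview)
Your proposal is correct and follows essentially the same approach as the paper: writing the transition kernel explicitly, verifying detailed balance by observing the symmetry of $\sum_{g\in G_x\cap G_y}w(g)/V(g)$, and then invoking the lumping (Dynkin) criterion via the conjugation bijection $g\mapsto h^{-1}gh$ together with the invariances of $w,W,v,V$, finishing with orbit--stabilizer. Your framing via $G$-equivariance $K(x^h,y^h)=K(x,y)$ is a slightly cleaner packaging of exactly the same computation the paper carries out line by line.
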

\begin{proof}
For any $x,y\in\mathcal{X}$, the transition probability from $x$ to $y$ of the twisted Burnside process is given by
\begin{equation}\label{E0}
    K_{x,y}=\frac{v(y)}{W(x)}\sum_{g\in G_x\cap G_y}\frac{w(g)}{V(g)}.
\end{equation}
Note that the factor $\sum_{g\in G_x\cap G_y}\frac{w(g)}{V(g)}$ is symmetric in $x,y$. Hence if $\pi(x)=\frac{W(x)v(x)}{Z}$ for $x\in\mathcal{X}$ (where $Z$ is a normalizing constant), then we have
\begin{equation}
    \pi(x)K_{x,y}=\frac{v(x)v(y)}{Z}\sum_{g\in G_x\cap G_y}\frac{w(g)}{V(g)}=\pi(y)K_{y,x}.
\end{equation}
This shows that the twisted Burnside process is reversible with stationary distribution proportional to $W(x)v(x)$.

For the second part, we assume that $w$ is constant on each conjugacy class of $G$ and $v$ is constant on each orbit of $\mathcal{X}$. Below we show by Dynkin's criterion (see \cite[Page 133]{KS}) that in this case the twisted Burnside process can be lumped onto orbits. Suppose that $x,y\in\mathcal{X}$ are in the same orbit. Then there exists $h\in G$ such that $y=x^h$. Hence 
\begin{equation}\label{E1}
    G_y=h^{-1}G_x h.
\end{equation}
As $w$ is constant on each conjugcy class of $G$, by (\ref{E1}) we have
\begin{equation}\label{E2}
    W(y)=\sum_{g\in G_y}w(g)=\sum_{g\in G_x}w(h^{-1}g h)=\sum_{g\in G_x} w(g)=W(x).
\end{equation}
For any $z\in\mathcal{X}$, we have
\begin{eqnarray}\label{E3}
&&\sum_{q\in O_z}v(q)\sum_{g\in G_y\cap G_q}\frac{w(g)}{V(g)}\nonumber\\
&=& \sum_{q\in O_z} v(q^h)\sum_{g\in G_y\cap G_{q^h}}\frac{w(g)}{V(g)}\nonumber\\
&=& \sum_{q\in O_z} v(q)\sum_{g\in h^{-1}(G_x\cap G_q)h}\frac{w(g)}{V(g)}\nonumber\\
&=& \sum_{q\in O_z} v(q)\sum_{g\in G_x\cap G_q}\frac{w(h^{-1} g h)}{V(h^{-1}g h)},
\end{eqnarray}
where the second equality follows from the fact that $v$ is constant on each orbit of $\mathcal{X}$.
Now note that
\begin{equation}\label{E4}
    \mathcal{X}_{h^{-1}g h}=\{x^h: x\in\mathcal{X}_g\}.
\end{equation}
By (\ref{E4}) and the fact that $v$ is constant on each orbit of $\mathcal{X}$, we have
\begin{equation}\label{E5}
    V(h^{-1}g h)=\sum_{x\in \mathcal{X}_{h^{-1}g h}}v(x)=\sum_{x\in\mathcal{X}_g}v(x^h)=\sum_{x\in\mathcal{X}_g}v(x)=V(g).
\end{equation}
By (\ref{E3}), (\ref{E5}) and the fact that $w$ is constant on each conjugacy class of $G$, we have
\begin{eqnarray}\label{E6}
  \sum_{q\in O_z} v(q)\sum_{g\in G_y\cap G_q}\frac{w(g)}{V(g)}
= \sum_{q\in O_z} v(q)\sum_{g\in G_x\cap G_q}\frac{w(g)}{V(g)}.
\end{eqnarray}
Therefore, by (\ref{E0}), (\ref{E2}) and (\ref{E6}), we obtain that
\begin{equation}
 \sum_{q\in O_z}K_{x,q}=\sum_{q\in O_z} K_{y,q}.
\end{equation}
By Dynkin's criterion, the chain can be lumped onto orbits of $\mathcal{X}$. Note that by (\ref{E2}) and the fact that $v(x)$ is constant on each orbit, we have $W(x)v(x)$ is constant on every orbit. Thus the lumped chain is reversible with stationary distribution 
\begin{equation}
    \tilde{\pi}(O_x) \propto |O_x|W(x)v(x).
\end{equation}
By the orbit-stabilizer theorem, we have
\begin{equation}
    \tilde{\pi}(O_x)\propto \frac{W(x)v(x)}{|G_x|}.
\end{equation}
\end{proof}

An example of the twisted version of the Burnside process considered in Theorem \ref{Theorem1} is presented below. Specializing $k=2$ and $\gamma_2=1$ in the example gives the chain with beta-binomial stationary distribution considered in Section \ref{Sect.5} below.

\begin{example}\label{Example1}
Consider $\mathcal{X}=[k]^n$ and $G=S_n$ with $G$ acting on $\mathcal{X}$ by permuting coordinates. Fix $k$ positive parameters $\theta,\gamma_2,\cdots,\gamma_k$. We take 
\begin{equation}
    w(\sigma)=\theta^{c(\sigma)},
\end{equation}
for every $\sigma\in S_n$, where $c(\sigma)$ is the number of cycles of $\sigma$. For any $\vec{x}=(x_1,\cdots,x_n) \in\mathcal{X}$ and $j\in [k]$, we define
\begin{equation}
    S(\vec{x},j):=\#\{i\in [n]: x_i=j\}.
\end{equation}
We further take
\begin{equation}
    v(\vec{x})=\prod_{j=2}^k\gamma_j^{S(\vec{x},j)}
\end{equation}
for every $\vec{x}\in\mathcal{X}$. Below we let $\gamma_1:=1$ to simplify notation.

Now we discuss the twisted Markov chain. From $\vec{x}\in\mathcal{X}$, we choose $\sigma\in S_n$ fixing $\vec{x}$ with probability $\frac{w(\sigma)}{W(\vec{x})}$. This can be realized as follows: find the set of indices $I_j:=\{l\in [n]: x_l=j\}$ for each $j\in [k]$; for each $I_j$, sample a permutation $\sigma_j\in S_{I_j}$ from the Ewens distribution with parameter $\theta$ (see Section \ref{Sect.5} for details of the Ewens distribution); $\sigma$ is the product of $\sigma_j$ for $j\in [k]$. 

Given $\sigma$, we choose $\vec{y}\in\mathcal{X}$ fixed by $\sigma$ with probability $\frac{v(\vec{y})}{V(\sigma)}$. This can be done as below: break $\sigma$ into cycles $C_1,\cdots,C_m$, and denote by $c_d$ the length of the cycle $C_d$ for every $d\in [m]$; for every $d\in [m]$, pick an integer $r_d\in [k]$ with probability $\frac{\gamma_{r_d}^{c_d}}{\sum_{l=1}^k \gamma_l^{c_d}}$, and take $y_i=r_d$ for every $i\in C_d$. Note that for any $\vec{y}\in\mathcal{X}_{\sigma}$, $y_i$ for $i\in C_d$ takes the same value (assuming that it's $r_d$). Hence the probability of generating $\vec{y}$ (where $\vec{y}\in \mathcal{X}_{\sigma}$) through this procedure is  
\begin{equation}
    \prod_{d=1}^m\frac{\gamma_{r_d}^{c_d}}{\sum_{l=1}^k   \gamma_l^{c_d}} \propto  \prod_{d=1}^m\gamma_{r_d}^{c_d}=\prod_{j=2}^k\gamma_j^{S(\vec{y},j)}.
\end{equation}

Note that $w$ is constant on each conjugacy class of $G$. Moreover, $v(\vec{x})$ only depends on $S(\vec{x},j)$ for $j\in [k]$, hence $v$ is constant on each orbit of $\mathcal{X}$. By Proposition \ref{Pro1}, the twisted Markov chain can be lumped onto orbits of $\mathcal{X}$. For $t_1,\cdots,t_k\in\mathbb{N}$ such that $\sum_{l=1}^k t_l=n$, let $\vec{t}:=(t_1,\cdots,t_k)$ denote the orbit of $\mathcal{X}$ consisting of $\vec{x}=(x_1,\cdots,x_n)$ such that $S(\vec{x},j)=t_j$ for every $j\in [k]$. Note that for any $\vec{x}$ in the orbit $\vec{t}$, we have $|G_{\vec{x}}|=\prod_{j=1}^k (t_j)!$ and 
\begin{equation}
    W(\vec{x})=\sum_{\sigma\in S_n: \sigma \text{ fixes }\vec{x}}\theta^{c(\sigma)}=\prod_{j=1}^k(\theta\cdots (\theta+t_j-1)) \propto \prod_{j=1}^k \Gamma(t_j+\theta).
\end{equation}
Thus by Proposition \ref{Pro1}, the stationary distribution of the lumped chain is
\begin{equation}
    \tilde{\pi}(\vec{t})\propto \prod_{j=1}^k\frac{\Gamma(t_j+\theta)}{(t_j)!} \prod_{j=2}^k \gamma_j^{t_j}.
\end{equation}
Note that the term $\prod_{j=1}^k\frac{\Gamma(t_j+\theta)}{(t_j)!}$ is proportional to the probability corresponding to the symmetric Dirichlet-multinomial distribution of parameter $(\theta,\cdots,\theta)$. Thus the twisted Burnside process offers a $k$-dimensional deformation of this classical distribution. Specializing to the case of $k=2$ and choosing parameters so that the base beta-binomial is uniform on $\{0,1,\cdots,n\}$ the deformation is a discrete exponential distribution truncated to this interval.
\end{example}

We close with a final remark. There are many probability measures on $S_n$ that are constant on conjugacy classes. One way to construct these is to define $P(\sigma)$ as proportional to $\theta^{d(\sigma)}$ where $d(\sigma)= d(id,\sigma)$ for $d$ a bi-invariant metric on $S_n$. In turn, such bi-invariant metrics can be constructed as follows: Let $\rho: S_n \rightarrow GL(V)$ be a faithful unitary representation of $S_n$. Let $\|.\|$ be a unitarily invariant norm on $V$. Then $d(\sigma,\tau) = \|\rho(\sigma) - \rho(\tau)\|$ is a bi-invariant metric on $S_n$. In particular, the Cayley distance $d(\sigma, \tau) = n - C(\sigma \tau^{-1}) = \min \# \text{transpositions required to bring } \sigma \text{ to } \tau$ is bi-invariant, giving the example used above. Similarly the Hamming distance $\#\{i \text{ with }\sigma(i)  \text{ different from } \tau(i)\}$ is bi-invariant. A host of other examples appear in \cite[Chapter 6C]{Dia}. This includes von Neuman's useful characterization of unitarily invariant matrix norms.

\section{The twisted Burnside process and Hahn polynomials}\label{Sect.5}
In this section, we add a parameter to the Burnside process on $C_2^n$ with the group $S_n$ so that more general Hahn polynomials appear as eigenfunctions. The idea is simple: replace the uniform distribution on $S_n$, used in step one of the algorithm, by the Ewens distribution
\begin{equation}\label{Ewens}
    \mathbb{P}_{\theta}(\sigma)=\frac{1}{(\theta)_n}\theta^{c(\sigma)}, (\theta)_n=\theta(\theta+1)\cdots (\theta+n-1)
\end{equation}
where $c(\sigma)$ is the number of cycles in $\sigma$ and $0<\theta<\infty$ is a parameter. This familiar distribution is studied in genetics and combinatorics \cite{Crane}. It may be seen as `the Mallows model through the Cayley metric' as discussed at the end of Section \ref{Sect.4}. This chain was discovered via the twisted Burnside construction of Section \ref{Sect.4}. In hindsight, the following simplified description is available. 

On $C_2^n$, from $x\in C_2^n$ with $|x|=\# \text{ ones in }x$
\begin{itemize}
    \item Identify $G_x$ with $S_{|x|}\times S_{n-|x|}$
    \item Pick $\sigma\in S_{|x|}\times S_{n-|x|}$ choosing the two components independently from the Ewens measure (\ref{Ewens})
    \item Break $\sigma$ into cycles and label the cycles $0 \slash 1$ with probability $\frac{1}{2}$. Put this $0\slash 1$ string into $y\in C_2^n$.
\end{itemize}

The argument of Section \ref{Sect.4} shows

\begin{proposition}\label{Station}
The twisted Burnside process given above, lumped to orbits, is a Markov chain on $\{0,1,\cdots,n\}$ with a beta-binomial distribution having parameters $\alpha=\beta=\theta$ (see (\ref{Beta})). 
\end{proposition}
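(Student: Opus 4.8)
\textbf{Proof proposal for Proposition \ref{Station}.}

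The plan is to apply Proposition \ref{Pro1} directly, exactly as in Example \ref{Example1} specialized to $k=2$, $\theta$ arbitrary, and $\gamma_2=1$. First I would set $\mathcal{X}=C_2^n$, $G=S_n$ acting by permuting coordinates, and choose the weight $w(\sigma)=\theta^{c(\sigma)}$ on $G$ and the weight $v\equiv 1$ on $\mathcal{X}$. One checks that the first step of the algorithm described above — pick $\sigma\in G_x\cong S_{|x|}\times S_{n-|x|}$ by choosing the two components independently from the Ewens measure — is precisely the measure $\sigma\mapsto w(\sigma)/W(x)$ on $G_x$, because the Ewens measure on $S_m$ assigns probability proportional to $\theta^{c(\sigma)}$ and the number of cycles of a product $\sigma_1\sigma_2$ with $\sigma_1\in S_{|x|}$, $\sigma_2\in S_{n-|x|}$ is $c(\sigma_1)+c(\sigma_2)$, so the product of the two Ewens densities is proportional to $\theta^{c(\sigma)}$. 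Likewise the second step — break $\sigma$ into cycles and label each cycle $0$ or $1$ with probability $\tfrac12$ — is exactly the uniform (equivalently, $v\equiv 1$) choice of $y\in\mathcal{X}_\sigma$, since $\mathcal{X}_\sigma$ is in bijection with $\{0,1\}^{c(\sigma)}$.

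Next I would verify the two hypotheses needed to lump: $w$ is constant on conjugacy classes of $S_n$ (immediate, as $c(\sigma)$ is a class function), and $v$ is constant on orbits of $\mathcal{X}$ (immediate, as $v\equiv 1$). Proposition \ref{Pro1} then says the chain lumps to the orbits of $C_2^n$ — which are indexed by $j=|x|\in\{0,1,\dots,n\}$ — and gives the stationary distribution of the lumped chain as $\tilde\pi(j)\propto W(x)v(x)/|G_x|$ for any $x$ with $|x|=j$. Here $|G_x|=j!\,(n-j)!$ and
\begin{equation*}
    W(x)=\sum_{\substack{\sigma\in S_n\\ \sigma\text{ fixes }x}}\theta^{c(\sigma)}=\Big(\sum_{\sigma_1\in S_j}\theta^{c(\sigma_1)}\Big)\Big(\sum_{\sigma_2\in S_{n-j}}\theta^{c(\sigma_2)}\Big)=(\theta)_j\,(\theta)_{n-j},
\end{equation*}
using the standard identity $\sum_{\sigma\in S_m}\theta^{c(\sigma)}=(\theta)_m$. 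Therefore
\begin{equation*}
    \tilde\pi(j)\propto \frac{(\theta)_j(\theta)_{n-j}}{j!\,(n-j)!}=\binom{n}{j}\frac{(\theta)_j(\theta)_{n-j}}{n!},
\end{equation*}
and after normalizing (the normalizing constant being $(2\theta)_n/n!$, by Vandermonde for the Pochhammer symbol) this is exactly $m(j)=\binom{n}{j}(\theta)_j(\theta)_{n-j}/(2\theta)_n$, i.e.\ the beta-binomial distribution \eqref{Beta} with $\alpha=\beta=\theta$.

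There is essentially no hard step here; the only point requiring a word of care is checking that the two sampling steps in the algorithm coincide with the $w$- and $v$-weighted choices of Proposition \ref{Pro1}, and that the simplified description in terms of $S_{|x|}\times S_{n-|x|}$ really does realize the Ewens-weighted measure on the stabilizer $G_x$ (which follows from the multiplicativity of $\theta^{c(\cdot)}$ over the block decomposition of the stabilizer). Reversibility and the lumping are then inherited verbatim from Proposition \ref{Pro1}, so the proof reduces to the short computation of $W(x)$ and $|G_x|$ above. I would also note in passing that $\theta=1$ recovers $\alpha=\beta=1$, hence the uniform distribution and the chain $Q$ of Theorem \ref{Theorem1}, as a consistency check.
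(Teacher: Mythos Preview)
Your proposal is correct and follows exactly the approach the paper intends: the paper merely says ``The argument of Section \ref{Sect.4} shows'' this proposition, and your write-up is precisely that argument — applying Proposition \ref{Pro1} with $w(\sigma)=\theta^{c(\sigma)}$ and $v\equiv 1$ (the $k=2$, $\gamma_2=1$ specialization of Example \ref{Example1}), then computing $W(x)=(\theta)_{|x|}(\theta)_{n-|x|}$ and $|G_x|=|x|!\,(n-|x|)!$ to identify the lumped stationary distribution as the beta-binomial with $\alpha=\beta=\theta$. Nothing is missing.
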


The transition matrix of this Markov chain, call it $p_{ij}^{n,\theta}$, can be written explicitly.

\begin{proposition}\label{Pro}
Consider the twisted Burnside process given above, lumped to orbits. The transition matrix is given by
\begin{equation}
   p^{n,\theta}_{0j}=\binom{n}{j}\frac{\frac{\theta}{2}\cdots(\frac{\theta}{2}+j-1)\frac{\theta}{2}\cdots(\frac{\theta}{2}+n-j-1)}{\theta(\theta+1)\cdots(\theta+n-1)},
\end{equation}
\begin{equation}
    p^{n,\theta}_{jk}=\sum_{\max\{0,j+k-n\}\leq l\leq\min\{j,k\}}p^{j,\theta}_{0l}p^{n-j,\theta}_{0,k-l}.
\end{equation}
\end{proposition}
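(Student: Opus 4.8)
The plan is to reduce the whole statement to the computation of the first row $p^{n,\theta}_{0j}$, and then to obtain that row from an exponential generating function for Ewens-weighted permutations whose cycles are independently two-colored.

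First I would prove $p^{n,\theta}_{0j}=\binom{n}{j}\frac{(\theta/2)_j(\theta/2)_{n-j}}{(\theta)_n}$, since this is the atom from which everything else follows. Start from the all-zeros string, draw $\sigma\sim$ Ewens$(\theta)$ on $[n]$, give each cycle an independent fair coin, and let $S$ be the number of elements lying in $1$-colored cycles, so that $\mathbb{P}(S=j)=p^{n,\theta}_{0j}$. By the exponential formula for permutations weighted multiplicatively over cycles, assigning an $i$-cycle the expected weight $\theta\cdot\tfrac12(1+u^{i})$ (the factor $u^{0}=1$ for a $0$-colored cycle, $u^{i}$ for a $1$-colored one, and a $\theta$ for the $\theta^{c(\sigma)}$ in the Ewens weight),
\begin{equation*}
\sum_{n\ge 0}\frac{z^{n}}{n!}\,(\theta)_{n}\,\mathbb{E}_{\mathbb{P}_\theta}\!\big[u^{S}\big]=\exp\!\Big(\sum_{i\ge 1}\frac{\theta}{2i}(1+u^{i})z^{i}\Big)=(1-z)^{-\theta/2}(1-uz)^{-\theta/2}.
\end{equation*}
Expanding each factor by the binomial series $(1-t)^{-\theta/2}=\sum_{m}\frac{(\theta/2)_{m}}{m!}t^{m}$ and reading off the coefficient of $u^{j}z^{n}$ gives $(\theta)_{n}\,\mathbb{P}(S=j)=\binom{n}{j}(\theta/2)_{j}(\theta/2)_{n-j}$, which is the claim. (One could instead sum directly over cycle types, or invoke the known fact that two-coloring the cycles of a Ewens$(\theta)$ permutation yields color-parts that are conditionally Ewens given their sizes with sizes Dirichlet--multinomial; the generating-function route is shortest and self-contained.)

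Next I would derive the convolution formula for $p^{n,\theta}_{jk}$. Fix $x\in C_2^{n}$ with $|x|=j$; by Proposition \ref{Pro1} (lumpability, applicable since $w(\sigma)=\theta^{c(\sigma)}$ is a class function and here $v\equiv 1$) the law of $|y|$ depends on $x$ only through $j$, so we may compute with this $x$. Writing $G_{x}=S_{A}\times S_{B}$, where $A$ is the set of positions carrying a $1$ ($|A|=j$) and $B$ those carrying a $0$ ($|B|=n-j$), the permutation $\sigma$ is an independent pair $(\sigma_{A},\sigma_{B})$ of Ewens$(\theta)$ permutations on $A$ and $B$; in step two each cycle of $\sigma_{A}$ and each cycle of $\sigma_{B}$ gets an independent fair coin, and the support of $y$ consists of the positions in $1$-colored cycles. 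Hence $|y|=U+V$ with $U,V$ independent, $U$ distributed as $|y|$ when the chain starts from the all-zeros string of length $j$ (i.e. $\mathbb{P}(U=l)=p^{j,\theta}_{0l}$) and $V$ likewise with law $p^{n-j,\theta}_{0,\cdot}$ on $\{0,\dots,n-j\}$. Convolving and imposing $0\le l\le j$ together with $0\le k-l\le n-j$ produces exactly the stated sum over $\max\{0,j+k-n\}\le l\le\min\{j,k\}$.

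I do not expect any step to be a genuine obstacle: the convolution step is pure bookkeeping once lumpability is cited, and the only real computation is the first-row generating-function identity, where the points demanding care are the normalization in the exponential formula (it is $(\theta)_{n}\mathbb{E}_{\mathbb{P}_\theta}$ that appears on the left-hand side) and the matching of $\sum_{i\ge 1}z^{i}/i=-\log(1-z)$ with the $\theta/2$-shifted binomial series. Accordingly I would present the first-row computation first and the general entry as an immediate corollary.
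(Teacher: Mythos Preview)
Your proof is correct and follows essentially the same approach as the paper: a generating-function computation for the first row and a convolution argument for the general entry. The paper phrases the first-row computation as an application of P\'olya's cycle index theorem (equations (\ref{Pol1})--(\ref{Pol2})), which is the same weighted exponential formula you invoke; your presentation via the bivariate EGF $(1-z)^{-\theta/2}(1-uz)^{-\theta/2}$ is in fact more explicit than the paper's one-line reference to ``taking derivatives and multiplying,'' and the convolution step is handled identically.
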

\begin{proof}
We prove this using P\'olya's cycle index theorem (see (\ref{Pol1}),(\ref{Pol2})). Note that we have
\begin{equation}
    p^{n,\theta}_{0j}=\frac{n!}{\theta(\theta+1)\cdots(\theta+n-1)}\frac{1}{n!}\sum_{g\in S_n,\lambda\vdash j}\prod_{i=1}^j\binom{a_i(g)}{b_i(\lambda)}(\frac{\theta}{2})^{a_1(g)+\cdots+a_n(g)}.
\end{equation}
Using P\'olya's cycle index theorem by taking derivatives and multiplying, we get
\begin{equation}
    p^{n,\theta}_{0j}=\binom{n}{j}\frac{\frac{\theta}{2}\cdots(\frac{\theta}{2}+j-1)\frac{\theta}{2}\cdots(\frac{\theta}{2}+n-j-1)}{\theta(\theta+1)\cdots(\theta+n-1)}.
\end{equation}
Moreover, by the definition of the twisted Burnside process given above, we have
\begin{equation}
    p^{n,\theta}_{jk}=\sum_{\max\{0,j+k-n\}\leq l\leq\min\{j,k\}}p^{j,\theta}_{0l}p^{n-j,\theta}_{0,k-l}.
\end{equation}
\end{proof}

Theorems \ref{Theorem1} and \ref{Theorem2} above 
deform in the following form.

\begin{theorem}\label{Theorem3}
Consider the twisted Burnside process on $C_2^n$ given above, lumped to orbits. The non-zero eigenvalues of the Markov chain are given by $1$ and 
\begin{equation}
    \lambda_k=~_3 F_2(-2k,2k+2\theta-1,\frac{\theta}{2};\theta,\theta|1)
\end{equation}
for $1\leq k\leq \frac{n}{2}$.
Moreover, the eigenfunctions corresponding to the zero eigenvalues are the Hahn polynomials on $\{0,1,\cdots,n\}$ with parameters $\alpha=\beta=\theta$ of odd degree. The eigenfunction corresponding to the eigenvalue $\lambda_k$ is the Hahn polynomial on $\{0,1,\cdots,n\}$ with parameters $\alpha=\beta=\theta$ of degree $2k$, $1\leq k\leq\frac{n}{2}$.
\end{theorem}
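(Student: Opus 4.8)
The plan is to run the same three-part strategy used for Theorems \ref{Theorem1} and \ref{Theorem2}, now with the Ewens weight $\theta^{c(\sigma)}$ in place of the uniform weight on $S_n$. First I would establish the eigenfunction part via Cannings' lemma (Lemma \ref{L1.1_4}): show that the lumped transition operator $p^{n,\theta}$ sends polynomials of degree $\leq \ell$ to polynomials of degree $\leq \ell$ on $\{0,1,\dots,n\}$, so that the eigenfunctions are forced to be the orthogonal polynomials for the stationary distribution, which by Proposition \ref{Station} is the beta-binomial with $\alpha=\beta=\theta$, i.e. the Hahn polynomials $Q_j$ with those parameters. The vanishing of the odd-degree eigenvalues is immediate from the symmetry $p^{n,\theta}_{jk}=p^{n,\theta}_{j,n-k}$ (clear from the $0\slash 1$ labelling being fair), which kills all odd powers of $(x-\tfrac{n}{2})$, exactly as in the $\theta=1$ case.

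To carry out the ``polynomial-preserving'' step I would essentially repeat the combinatorial computation in the proof of Theorem \ref{Theorem2}: starting from orbit $j$ one draws $\sigma_1\in S_j$ and $\sigma_2\in S_{n-j}$ from the Ewens measure, forms $Z_i\sim \mathrm{Binomial}(a_i+b_i,\tfrac12)$, and the new orbit is $X=\sum_i iZ_i$; one needs $\mathbb{E}[(X-\tfrac n2)^{2a}]$ to be a polynomial in $j$ of degree $\leq 2a$. The expansion into ``even order terms'' (Lemma \ref{L1.1_5}) reduces this to controlling mixed moments $\mathbb{E}\big[\prod_r (\sum_i i^{2l_r}(a_i+b_i))\big]$, which by the falling-factorial change of basis used there reduces to Ewens-analogues of Lemma \ref{L1.1_6}. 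The needed fact is that under the Ewens($\theta$) measure on $S_m$, for $\sum k_i l_i \le m$ one has $\prod_i (l_i)!\,\mathbb{E}\big[\prod_i \binom{a_{k_i}(\sigma)}{l_i}\big]=\prod_i (\theta/k_i)^{l_i}$ (and $0$ otherwise); this follows from the Ewens analogue of Pólya's cycle index, namely the weighted generating function $\sum_m \tfrac{1}{(\theta)_m}\big(\sum_\sigma \theta^{c(\sigma)}\prod x_i^{a_i(\sigma)}\big)t^m = e^{\theta\sum_i x_i t^i/i}$ — equivalently the Feller coupling / independence of cycle counts in the limit. The rest of the degree bookkeeping is formally identical to the $\theta=1$ argument, and the coefficient of the top degree term being $n$-independent for $n\geq 2a$ lets Lemma \ref{L1.1_4} conclude that the eigenvalue $\lambda_k$ attached to $Q_{2k}$ is independent of $n$ for $n\geq 2k$.

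For the eigenvalue computation I would use the same device as before: evaluate the eigenvalue-eigenfunction relation in the row $j=0$, giving $\lambda_k=\sum_{j=0}^n p^{n,\theta}_{0j} Q_{2k}(j)$ with $p^{n,\theta}_{0j}$ from Proposition \ref{Pro} (a ratio of Pochhammers with $\theta/2$), expand $Q_{2k}(j)$ via its ${}_3F_2$ hypergeometric form, interchange sums, and send $n\to\infty$. The key limit to identify is $\lim_{n\to\infty}\sum_{j=0}^n \frac{(-j)_l\,p^{n,\theta}_{0j}}{(-n)_l}$, which should equal the $l$-th moment $\int_0^1 x^l\,\mathrm{Beta}(\tfrac\theta2,\tfrac\theta2)(dx) = \frac{(\theta/2)_l}{(\theta)_l}$, because $p^{n,\theta}_{0j}$ is exactly a $\mathrm{Beta}(\tfrac\theta2,\tfrac\theta2)$-mixed binomial (Proposition \ref{Pro}'s formula is the beta-binomial with parameters $\theta/2,\theta/2$), so $j/n$ converges in distribution to that Beta law; making the tail estimates uniform is routine using Lemma \ref{L1.1_1}. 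Plugging back gives $\lambda_k = \sum_{l=0}^{2k}\frac{(-2k)_l(2k+2\theta-1)_l(\theta/2)_l}{(\theta)_l(\theta)_l\,l!} = {}_3F_2(-2k,2k+2\theta-1,\tfrac\theta2;\theta,\theta|1)$, the claimed formula.

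The main obstacle I expect is the polynomial-preserving step — specifically, nailing down the exact Ewens analogue of Lemma \ref{L1.1_6} (joint factorial moments of cycle counts) and verifying that the intricate subspace/change-of-basis argument from the proof of Theorem \ref{Theorem2} goes through verbatim with the extra $\theta$'s, with coefficients still independent of $n$. The eigenvalue limit is technically a $\mathrm{Beta}(\tfrac\theta2,\tfrac\theta2)$-moment computation that is conceptually clean but needs the same careful truncation as in the $\theta=1$ proof; one could alternatively bypass the analytic limit by recognizing $\sum_j p^{n,\theta}_{0j}(-j)_l/(-n)_l$ as a finite hypergeometric sum evaluable by Gauss/Vandermonde, which may be cleaner. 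I do not see a need for a Clausen-type quadratic identity here unless one wants to re-express $\lambda_k$ as a perfect square as in the $\theta=1$ case; for general $\theta$ the ${}_3F_2$ form is presumably the final answer.
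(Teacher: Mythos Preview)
Your overall strategy matches what the paper indicates: it states that the proof of Theorem~\ref{Theorem3} is ``similar but quite a bit more involved'' than that of Theorem~\ref{Theorem2} and refers the reader to \cite{Zho1,Zho2} for details, so your three-part plan (Cannings' lemma for the eigenfunctions, symmetry for the odd-degree vanishing, and an $n\to\infty$ limit for the eigenvalues) is the intended route. Your eigenvalue argument via $\sum_j p^{n,\theta}_{0j}Q_{2k}(j)$ and the beta-binomial $\to$ $\mathrm{Beta}(\tfrac\theta2,\tfrac\theta2)$ moment limit is fine and is essentially the continuous-limit computation the paper gives in Theorem~\ref{Theorem5}.

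There is, however, a concrete error in your Ewens ingredients. The generating function you state,
\[
\sum_m \frac{1}{(\theta)_m}\Big(\sum_{\sigma\in S_m}\theta^{c(\sigma)}\prod_i x_i^{a_i(\sigma)}\Big)t^m \;=\; e^{\theta\sum_i x_i t^i/i},
\]
is wrong: set all $x_i=1$ and the left side is $(1-t)^{-1}$ while the right side is $(1-t)^{-\theta}$. The correct identity keeps $1/m!$ in place of $1/(\theta)_m$. Consequently your factorial-moment formula is also wrong; under Ewens$(\theta)$ on $S_m$ with $S=\sum_i k_i l_i\le m$ one has
\[
\prod_i l_i!\,\mathbb{E}_\theta\Big[\prod_i\binom{a_{k_i}}{l_i}\Big]=\Big(\prod_i(\theta/k_i)^{l_i}\Big)\cdot\frac{m!\,(\theta)_{m-S}}{(m-S)!\,(\theta)_m},
\]
and the extra ratio equals $1$ only when $\theta=1$. (Your Feller-coupling intuition is correct asymptotically, but the finite-$m$ moments carry this correction.)

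This is exactly why the paper says ``quite a bit more involved'': that correction factor is a rational function of the size variable, so the subspace/change-of-basis argument from Theorem~\ref{Theorem2} does \emph{not} go through verbatim by ``inserting extra $\theta$'s''. The conclusion you need is still true --- for instance, for $a=1$ one computes $\mathbb{E}_{S_j,\theta}\big[\sum_i i^2 a_i\big]=\frac{j(j+\theta)}{\theta+1}$, giving $\mathbb{E}[(X-\tfrac n2)^2]=\frac{1}{4(\theta+1)}\big(j(j+\theta)+(n-j)(n-j+\theta)\big)$, a degree-$2$ polynomial in $j$ with $n$-independent leading coefficient $\tfrac{1}{2(\theta+1)}=\lambda_1$ --- but establishing the polynomial-preserving property in general requires reworking the combinatorics with the correct Ewens moments, not the simplified formula you wrote down.
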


\begin{theorem}\label{Theorem4}
Consider the twisted Burnside process on $C_2^n$ given above with $\theta\geq 1$. Denote by $\pi$ the stationary distribution of the chain (see Proposition \ref{Station}), and denote by $K_{x_0}^l$ for $x_0=(1,1,\cdots,1)$ the distribution after $l$ steps starting from $n$ ones. Then there exist positive constants $c(\theta),C(\theta)$ which only depend on $\theta$, such that for all $n\geq 2$ and all $l\geq 1$,
\begin{equation}
    c(\theta)(\frac{1}{2(1+\theta)})^l\leq\|K_{x_0}^l-\pi\|\leq C(\theta)(\frac{1}{2(1+\theta)})^l.
\end{equation}
\end{theorem}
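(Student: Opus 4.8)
The plan is to mimic the proof of Theorem~\ref{Theorem1} almost verbatim, replacing discrete Chebyshev inputs by Hahn ($\alpha=\beta=\theta$) inputs and carefully tracking all $\theta$-dependence so that the resulting constants depend on $\theta$ but not on $n$. By the $S_n$-invariance of $K^l$ and of $\pi$, the starting state $x_0=(1,\dots,1)$ (which sits in the orbit $j=n$) satisfies $\|K_{x_0}^l-\pi\|=\|(p^{n,\theta})^l(0,\cdot)-m_\theta\|$ — the total variation distance for the lumped chain of Propositions~\ref{Station} and \ref{Pro} started from the endpoint state, using the symmetry $j\leftrightarrow n-j$ of $m_\theta$ and of $p^{n,\theta}$. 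So it suffices to analyse the lumped chain on $\{0,1,\dots,n\}$; I treat $n$ even, the odd case being identical.

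First I would pin down the spectrum quantitatively. By Theorem~\ref{Theorem3} the non-zero eigenvalues are $1$ and $\lambda_k={}_3F_2(-2k,2k+2\theta-1,\tfrac{\theta}{2};\theta,\theta|1)$ for $1\le k\le n/2$, the eigenfunction for $\lambda_k$ being the degree-$2k$ Hahn polynomial. This terminating ${}_3F_2$ has parameters matching the balancing conditions of Watson's theorem (with $(a,b,c)=(-2k,2k+2\theta-1,\tfrac{\theta}{2})$ one has $\tfrac12(a+b+1)=\theta=2c$); applying it and clearing the negative-argument Gamma factors with the reflection formula yields the closed form
\begin{equation*}
\lambda_k=\frac{\Gamma(\tfrac{\theta+1}{2})\Gamma(\theta)}{\sqrt{\pi}\,\Gamma(\tfrac{\theta}{2})}\cdot\frac{\Gamma(k+\tfrac12)\Gamma(k+\tfrac{\theta}{2})}{\Gamma(k+\theta)\Gamma(k+\tfrac{\theta+1}{2})}.
\end{equation*}
(For $\theta=1$ this is $\binom{2k}{k}^2/2^{4k}$, matching Theorem~\ref{Theorem2}; if the proof of Theorem~\ref{Theorem3} already produces this form, I would simply quote it.) Three facts drop out: a direct evaluation gives $\lambda_1=\tfrac{1}{2(1+\theta)}$; the ratio $\lambda_{k+1}/\lambda_k=\tfrac{(k+1/2)(k+\theta/2)}{(k+\theta)(k+(\theta+1)/2)}<1$ for $\theta>0$, so $\lambda_k$ is positive and strictly decreasing, hence $\lambda_1=\max_{k\ge1}|\lambda_k|$ is the dominant non-trivial eigenvalue; and by Stirling (Lemma~\ref{L1.1_1}), $\lambda_k\sim C(\theta)k^{-\theta}$ as $k\to\infty$, so $\lambda_k=O(1/k)$ for $\theta\ge1$.

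For the lower bound I would apply Proposition~\ref{Lower} with $\psi$ the $L^2(m_\theta)$-normalized Hahn polynomial $Q_2$ of degree $2$ and $\beta=\lambda_1$. Since $Q_2(0)=1$ (the stated normalization) and $Q_2$ is a fixed degree-$2$ polynomial in $x/n$ whose coefficients stay bounded in $n$ for fixed $\theta$, one gets $\|Q_2\|_\infty\le C(\theta)$ uniformly in $n$, whence $\tfrac{|\psi(0)|}{2\|\psi\|_\infty}=\tfrac{|Q_2(0)|}{2\|Q_2\|_\infty}\ge c(\theta)>0$ and Proposition~\ref{Lower} yields $\|K_{x_0}^l-\pi\|\ge c(\theta)\lambda_1^l$. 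For the upper bound I would run the chi-square argument: $4\|K_{x_0}^l-\pi\|^2\le\chi^2(l)=\sum_{k=1}^{n/2}\lambda_k^{2l}\,\widehat{Q}_{2k}(0)^2$ with $\widehat{Q}_{2k}$ the $L^2(m_\theta)$-orthonormal Hahn polynomial, and use the estimates for Hahn polynomials in \cite[Section 2.5]{DKS} (the $\alpha=\beta=\theta$ specialization of those already cited for Theorem~\ref{Theorem1}) to bound $\widehat{Q}_{2k}(0)^2\le C(\theta)k^{a(\theta)}$. Factoring out $\lambda_1^{2l}$ and using that $\lambda_k/\lambda_1$ is decreasing with $\lambda_k/\lambda_1\sim c(\theta)k^{-\theta}$, pick $l_0=l_0(\theta)$ with $\sum_{k\ge2}k^{a(\theta)}(\lambda_k/\lambda_1)^{2l_0}<\infty$; then for $l\ge l_0$ the tail is at most $(\lambda_2/\lambda_1)^{2(l-l_0)}$ times this finite sum, giving $\chi^2(l)\le C(\theta)\lambda_1^{2l}$ and hence $\|K_{x_0}^l-\pi\|\le C(\theta)\lambda_1^l$. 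For the finitely many $l$ with $1\le l<l_0(\theta)$, the trivial bound $\|\cdot\|\le 1\le\lambda_1^{-l_0}\lambda_1^l$ (valid since $\lambda_1<1$) closes the gap with a $\theta$-dependent constant. Inserting $\lambda_1=\tfrac{1}{2(1+\theta)}$ finishes the proof.

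The main obstacle is the spectral analysis of $\lambda_k$: recognizing that the terminating ${}_3F_2$ is summable by Watson's theorem (the $\theta=1$ argument used Clausen's identity, Lemma~\ref{L1.1_2}, which no longer applies), and then extracting from the resulting Gamma ratio all three things at once — the exact value $\lambda_1=\tfrac{1}{2(1+\theta)}$, the monotonicity that makes $\lambda_1$ the dominant non-trivial eigenvalue, and the $k^{-\theta}$ decay needed for the chi-square sum to converge and to dominate the whole tail by the $k=1$ term. The only other delicate point is the uniform-in-$n$ control of the Hahn polynomials ($\|Q_2\|_\infty\le C(\theta)$ and $\widehat{Q}_{2k}(0)^2\le C(\theta)k^{a(\theta)}$), which is where the hypothesis $\theta\ge1$ is used and which is supplied by \cite[Section 2.5]{DKS}.
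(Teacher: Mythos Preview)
The paper does not actually give a proof of Theorem~\ref{Theorem4}: it says only that ``the proofs of Theorems~\ref{Theorem3} and~\ref{Theorem4} are similar but quite a bit more involved'' than those of Theorems~\ref{Theorem1} and~\ref{Theorem2}, attributes the restriction $\theta\ge 1$ to ``certain estimates of the eigenvalues'', and refers the reader to \cite{Zho1,Zho2}. Your outline follows exactly the template the paper advertises---reduce to the lumped chain by $S_n$-invariance and the $j\leftrightarrow n-j$ symmetry, feed the spectral data of Theorem~\ref{Theorem3} into Proposition~\ref{Lower} for the lower bound and into the $\chi^2$ inequality for the upper bound, and control the Hahn endpoint values via \cite{DKS}---so there is nothing substantive to compare against in the paper itself.

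Your one genuine addition is the eigenvalue analysis. For $\theta=1$ the paper summed the terminating ${}_3F_2$ via Clausen's identity (Lemma~\ref{L1.1_2}), which does not generalize; you observe instead that the parameters $(-2k,\,2k+2\theta-1,\,\tfrac{\theta}{2};\,\theta,\,\theta)$ satisfy the balancing conditions of Watson's ${}_3F_2$ theorem, giving the closed Gamma-ratio form. I checked this: the summation is legitimate, the resulting expression specializes to $\binom{2k}{k}^2/2^{4k}$ at $\theta=1$, gives $\lambda_1=\tfrac{1}{2(1+\theta)}$ on the nose, yields the ratio $\lambda_{k+1}/\lambda_k=\tfrac{(k+1/2)(k+\theta/2)}{(k+\theta)(k+(\theta+1)/2)}<1$ and hence monotonicity, and gives $\lambda_k\asymp C(\theta)k^{-\theta}$ by Stirling. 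This is cleaner than what one might expect the deferred argument to do and makes the $k=1$ dominance and the tail summability transparent. One small discrepancy worth flagging: you locate the need for $\theta\ge 1$ in the Hahn endpoint bounds from \cite{DKS}, whereas the paper says it enters through the eigenvalue estimates; with your closed form the eigenvalue side seems fine for all $\theta>0$, so if you pursue this it is worth checking carefully which side of the argument actually breaks for $\theta<1$.
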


The proofs of Theorems \ref{Theorem3} and \ref{Theorem4} are similar but quite a bit more involved, to the proofs of Theorems \ref{Theorem1} and \ref{Theorem2}. The restriction that $\theta\geq 1$ in Theorem \ref{Theorem4} is due to a technical step in our proof (for certain estimates of the eigenvalues). We refer the interested reader to \cite{Zho1,Zho2}. This develops things for $k\geq 2$ and has other approaches to proof.

We have not (yet) succeeded in finding a two-parameter deformation of the Burnside process on $C_2^n$ which gives the full set of Hahn polynomials as eigenfunctions. Similarly, we have not succeeded in diagonalizing the Burnside process on $[k]^n$ for any $k\geq 3$.

The point of this paper was to show (a) that orthogonal polynomials `pop up' everyplace (b) seeing orthogonal polynomials as belonging to families leads to useful extension of classical algorithms.

We are sorry not to be able to ask Dick Askey for further help.

\section{A continuous limit of the Burnside process}

A referee has made the welcome suggestion that we try to `pass to the limit' going from our discrete version of the Burnside process analyzed above to a continuous process. While we don't know a continuous version of the Cauchy-Frobenius Lemma or P\'olya theory, we were able to pass to the limit and this proved informative.

As motivation, recall that the discrete process begins with a point $x\in C_2^n$. A permutation in $S_k\times S_{n-k}$ is chosen at random (there are $k$ ones in $x$), split into cycles and these are labeled $0\slash 1$ to give $y\in C_2^n$. As explained, only the number of ones enters, not their positions. So the process can be thought of as taking place on $\{0,1,2,\cdots,n\}$. We divide by $n$ and form a process on $[0,1]$. The analog of the cycles of a random permutation is replaced by a stick-breaking process on $[0,1]$ familiar from the Chinese restaurant process and Dirichlet random measures (\cite{ABT,Kin,Set}). Combining gives the following Markov chain on $[0,1]$:

From $x\in [0,1]$, break the interval $[0,x]$ into countably many pieces by a stick-breaking process. Namely, let $R_1,R_2,\cdots$ be independent $Beta(1,\theta)$ random variables, and define $Y_1=xR_1$ and $Y_j=x(1-R_1)\cdots(1-R_{j-1})R_j$ for every $j=2,3,\cdots$; then we break $[0,x]$ into pieces of lengths $Y_1,Y_2,\cdots$. Break the interval from $x$ to $1$ in the same way. Label each interval $0\slash 1$ by flipping a fair coin. Let $y$ be the total length of the pieces labeled $1$. This gives a Markov chain on $[0,1]$. It is a natural limiting version of our discrete Burnside process.

This Markov chain can be equivalently described as follows (\cite[Section 3]{DK},\cite{Let,Set}). From $x\in [0,1]$, sample two independent $Beta(\frac{\theta}{2},\frac{\theta}{2})$ random variables $Z,Z'$. Let $y=xZ+(1-x)Z'$, and move to $y$.

\begin{theorem}\label{Theorem5}
The Markov chain above is reversible with $Beta(\theta,\theta)$ stationary distribution. The non-zero eigenvalues are given by $1$ and
\begin{equation}\label{Exp1}
    \lambda_k=~_3F_2(-2k,2k+2\theta-1,\frac{\theta}{2};\theta,\theta|1)
\end{equation}
for $k=1,2,\cdots$. An alternative expression for $\lambda_k$ is given by
\begin{equation}\label{Exp2}
    \lambda_k=\mathbb{E}[(Z-Z')^{2k}],
\end{equation}
where $Z,Z'$ are two independent $Beta(\frac{\theta}{2},\frac{\theta}{2})$ random variables.

Moreover, the eigenfunctions corresponding to the zero eigenvalues are the Jacobi polynomials associated to the stationary distribution of odd degree. For every $k=1,2,\cdots$, the eigenfunction corresponding to the eigenvalue $\lambda_k$ is the Jacobi polynomial associated 
to the stationary distribution of degree $2k$.
\end{theorem}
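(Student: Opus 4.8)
\textbf{Proof proposal for Theorem \ref{Theorem5}.}

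The plan is to mirror the strategy used for Theorems \ref{Theorem2} and \ref{Theorem3}: verify that the transition operator preserves the degree filtration of polynomials, invoke a continuous analogue of Cannings' lemma to identify the eigenfunctions as the orthogonal polynomials for the stationary distribution, and then compute the eigenvalues by tracking leading coefficients and evaluating a $_3F_2$. First I would establish the stationary distribution: using the second (beta-mixing) description, if $X\sim Beta(\theta,\theta)$ and $Z,Z'\sim Beta(\frac{\theta}{2},\frac{\theta}{2})$ are independent, then $XZ+(1-X)Z'$ is again $Beta(\theta,\theta)$; this is the classical beta-algebra identity referenced via \cite{DK,Let,Set} and also follows as the $n\to\infty$ limit of Proposition \ref{Station} (the beta-binomial with $\alpha=\beta=\theta$ rescaled by $n$ converges to $Beta(\theta,\theta)$). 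Reversibility follows because the bivariate density of $(X,Y)$ is symmetric: writing the joint law of $(X,Y)$ from $Y=XZ+(1-X)Z'$ and checking the Jacobian, or more slickly by noting the kernel is a mixture version of the symmetric construction in Proposition \ref{Pro1}.

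Next I would show the operator $K$ sends polynomials of degree $\le m$ to polynomials of degree $\le m$. From $Kf(x)=\mathbb{E}[f(xZ+(1-x)Z')]$, each monomial $x^m$ maps to $\mathbb{E}[(xZ+(1-x)Z')^m]=\sum_{i=0}^m\binom{m}{i}x^i(1-x)^{m-i}\mathbb{E}[Z^i]\mathbb{E}[Z'^{m-i}]$, visibly a polynomial in $x$ of degree $\le m$. Moreover the leading ($x^m$) coefficient is obtained by collecting the top-degree contributions: $x^i(1-x)^{m-i}$ contributes $(-1)^{m-i}x^m$, so the coefficient of $x^m$ in $Kx^m$ is $\sum_{i=0}^m\binom{m}{i}(-1)^{m-i}\mathbb{E}[Z^i]\mathbb{E}[Z'^{m-i}]=\mathbb{E}[(Z-Z')^m]$. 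For the continuous Cannings argument: $K$ is bounded self-adjoint on $L^2(Beta(\theta,\theta))$, and the orthogonal polynomials $\{P_m\}$ (the Jacobi polynomials for this weight) form an orthogonal basis; since $KP_m$ lies in $\mathrm{span}\{P_0,\dots,P_m\}$ and $K$ is self-adjoint, the same induction as in Lemma \ref{L1.1_4} forces $KP_m=\lambda_m P_m$ with $\lambda_m$ equal to the leading coefficient ratio, i.e.\ $\lambda_m=\mathbb{E}[(Z-Z')^m]$. For $m$ odd this expectation vanishes by the $Z\leftrightarrow Z'$ symmetry, giving the stated zero eigenvalues; for $m=2k$ it gives \eqref{Exp2}.

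To get the closed form \eqref{Exp1}, I would evaluate $\mathbb{E}[(Z-Z')^{2k}]$ directly using the beta moments $\mathbb{E}[Z^j]=\frac{(\frac{\theta}{2})_j}{(\theta)_j}$: expand $(Z-Z')^{2k}=\sum_{l}\binom{2k}{l}(-1)^l Z^{2k-l}Z'^{l}$, take expectations, and reindex; the alternating sum should collapse (via a Vandermonde-type $_3F_2$ identity, exactly as in the $\alpha=\beta=1$ case where one got $_3F_2(\frac12,-2k,2k+1;1,1|1)$) to $_3F_2(-2k,2k+2\theta-1,\frac{\theta}{2};\theta,\theta|1)$. Alternatively, and more cheaply, I would argue that $\lambda_k$ must coincide with the degree-$2k$ eigenvalue of Theorem \ref{Theorem3} in the $n\to\infty$ limit: the rescaled discrete chain $p^{n,\theta}_{ij}$ converges to $K$, the Hahn polynomials of fixed degree converge to Jacobi polynomials, and the eigenvalues in Theorem \ref{Theorem3} are already independent of $n$ for $n\ge 2k$, hence equal to \eqref{Exp1} on the nose. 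I expect the main obstacle to be the soft-analysis justification of the continuous Cannings lemma — specifically confirming that the Jacobi polynomials are genuinely a complete orthogonal basis (so that ``$KP_m$ is a polynomial of degree $\le m$'' pins down the whole spectrum and there is no continuous spectrum hiding), and ensuring the formal leading-coefficient computation is legitimate; both are standard but deserve a careful line. The hypergeometric identity in the last step is routine given Clausen's formula (Lemma \ref{L1.1_2}) and Gauss's theorem (Lemma \ref{L1.1_3}), just as in the proof of Theorem \ref{Theorem2}.
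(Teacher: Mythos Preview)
Your overall strategy---verify that $K$ preserves polynomial degree, read off the leading coefficient as $\mathbb{E}[(Z-Z')^m]$, and invoke a continuous Cannings argument---is exactly what the paper does, so the eigenfunction identification and the expression \eqref{Exp2} go through as you describe. Two points of departure are worth noting.

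First, for reversibility the paper is more concrete than your sketch: it writes down the transition density $k(x,y)$ explicitly as an integral over the common part $z=xZ$, and then checks $\pi(x)k(x,y)=\pi(y)k(y,x)$ by inspection. Your beta-algebra/limit argument gives stationarity but not reversibility without the extra symmetry check you allude to; the paper's direct computation is cleaner.

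Second, and more substantively, the derivation of the ${}_3F_2$ form \eqref{Exp1} is different. The paper does \emph{not} expand $\mathbb{E}[(Z-Z')^{2k}]$ and collapse the resulting alternating sum. Instead it uses the eigenfunction relation itself: starting from $x=0$ the one-step distribution is simply $Z'\sim Beta(\tfrac{\theta}{2},\tfrac{\theta}{2})$, so $\lambda_k=K\phi_{2k}(0)=\mathbb{E}[\phi_{2k}(T)]$ with $T\sim Beta(\tfrac{\theta}{2},\tfrac{\theta}{2})$; plugging the ${}_2F_1$ form of the Jacobi polynomial and the beta moments $\mathbb{E}[T^l]=(\tfrac{\theta}{2})_l/(\theta)_l$ gives \eqref{Exp1} in one line. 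Your proposed route---expanding $\mathbb{E}[(Z-Z')^{2k}]=\sum_l\binom{2k}{l}(-1)^l\frac{(\theta/2)_{2k-l}}{(\theta)_{2k-l}}\frac{(\theta/2)_l}{(\theta)_l}$---produces a different-looking sum; equating it with \eqref{Exp1} is precisely the nontrivial ${}_3F_2$ transformation recorded in the paper's final Remark (attributed to Stanton), so your ``Vandermonde-type collapse'' is not as routine as you suggest. Clausen's formula and Gauss's theorem are special to $\theta=1$ and play no role here. Your alternative (inherit \eqref{Exp1} from Theorem~\ref{Theorem3} since those eigenvalues are already $n$-independent for $n\ge 2k$) does work and is the cheapest fix.
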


\begin{proof}
From the definition of the Markov chain, the transition density is given by
\begin{eqnarray}
    k(x,y)&=& \int_{\max\{0,x+y-1\}}^{\min\{x,y\}}\frac{z^{\frac{\theta}{2}-1}(x-z)^{\frac{\theta}{2}-1}}{B(\frac{\theta}{2},\frac{\theta}{2})} \frac{(y-z)^{\frac{\theta}{2}-1}(1-x-y+z)^{\frac{\theta}{2}-1}}{B(\frac{\theta}{2},\frac{\theta}{2})}dz\nonumber\\
    &&\times x^{1-\theta}(1-x)^{1-\theta}
\end{eqnarray}
for $x,y\in [0,1]$. Let $\pi(x)=\frac{x^{\theta-1}(1-x)^{\theta-1}}{B(\theta,\theta)},x\in [0,1]$ be the probability density function of $Beta(\theta,\theta)$.
We have
\begin{equation*}
    \pi(x)k(x,y)=\pi(y)k(y,x)
\end{equation*}
for any $x,y\in (0,1)$. Therefore the Markov chain is reversible with $Beta(\theta,\theta)$ stationary distribution.

Suppose that we start from $x\in [0,1]$. Sample two independent $Beta(\frac{\theta}{2},\frac{\theta}{2})$ random variables $Z,Z'$, and let $y=xZ+(1-x)Z'$. This gives one iteration of the Markov chain. For every $l=1,2,\cdots$
\begin{equation}\label{En1}
    \mathbb{E}[y^l|x]=\mathbb{E}[(Z-Z')^l]x^l+\sum_{j=0}^{l-1}\binom{l}{j}\mathbb{E}[(Z-Z')^{j}(Z')^{l-j}]x^j.
\end{equation}
The right hand side of (\ref{En1}) is a polynomial in $x$ of degree $\leq l$ with leading coefficient given by $\mathbb{E}[(Z-Z')^l]$. When $l$ is odd, as the distribution of $Z-Z'$ is symmetric around $0$, we have $\mathbb{E}[(Z-Z')^l]=0$. 

By Cannings argument (the analogue of Lemma \ref{L1.1_4}), the non-zero eigenvalues of the Markov chain are given by $1$ and $\lambda_k:=\mathbb{E}[(Z-Z')^{2k}]$ for $k=1,2,\cdots$. Moreover, the eigenfunctions corresponding to the zero eigenvalues are the Jacobi polynomials associated to $Beta(\theta,\theta)$ of odd degree, and the eigenfunction corresponding to the eigenvalue $\lambda_k$ is the Jacobi polynomial associated 
to $Beta(\theta,\theta)$ of deree $2k$ for every $k=1,2,\cdots$.

Finally, we show that 
\begin{equation*}
    \lambda_k=~_3F_2(-2k,2k+2\theta-1,\frac{\theta}{2};\theta,\theta|1).
\end{equation*}
We denote by
\begin{equation}
    \phi_{2k}(x)=~_2 F_1(-2k,2\theta+2k-1;\theta|x)=\sum_{l=0}^{2k}\frac{(-2k)_l(2\theta+2k-1)_l }{(\theta)_l}\frac{x^l}{l!}
\end{equation}
the Jacobi polynomial associated to $Beta(\theta,\theta)$ of degree $2k$, normalized so that $\phi_{2k}(0)=1$. Note that the one-step distribution starting from $0$ follows the $Beta(\frac{\theta}{2},\frac{\theta}{2})$ distribution. Thus letting $T\sim Beta(\frac{\theta}{2},\frac{\theta}{2})$, we have
\begin{equation}
    \mathbb{E}[\phi_{2k}(T)]=\lambda_k\phi_{2k}(0)=\lambda_k.
\end{equation}
For every $l=0,1,2,\cdots$
\begin{equation}
    \mathbb{E}[T^l]=\frac{1}{B(\frac{\theta}{2},\frac{\theta}{2})}\int_{0}^1 x^{l+\frac{\theta}{2}-1}(1-x)^{\frac{\theta}{2}-1}dx=\frac{(\frac{\theta}{2})_l}{(\theta)_l}.
\end{equation}
Hence
\begin{eqnarray*}
\lambda_k&=&\mathbb{E}[\phi_{2k}(T)] = \sum_{l=0}^{2k}\frac{(-2k)_l(2\theta+2k-1)_l}{(\theta)_l}\frac{\mathbb{E}[T^l]}{l!}\\
&=&~_3 F_2(-2k,2k+2\theta-1,\frac{\theta}{2};\theta,\theta|1). 
\end{eqnarray*}

\end{proof}

\begin{remark}
Comparison with Theorems \ref{Theorem2} and \ref{Theorem3} shows this limit captures the essential features we encountered. Note that the eigenvalue $\lambda_k$ here matches that of the discrete chain in Theorem \ref{Theorem3} as long as $n\geq 2k$.
\end{remark}

\begin{remark}
As $Z,Z'\in [0,1]$, we have $|Z-Z'|\leq 1$. Hence for any $k=1,2,\cdots$
\begin{equation*}
    \lambda_{k}=\mathbb{E}[|Z-Z'|^{2k}]\geq \mathbb{E}[|Z-Z'|^{2k+2}]=\lambda_{k+1}.
\end{equation*}
Therefore the eigenvalues $\lambda_k$ are monotone decreasing.
\end{remark}

\begin{remark}
The two expressions (\ref{Exp1}) and (\ref{Exp2}) for $\lambda_k$ lead to the following identity
\begin{equation}
    \sum_{l=0}^{2k}\frac{(-2k)_l(\frac{\theta}{2})_l(\frac{\theta}{2})_{2k-l}}{(\theta)_l(\theta)_{2k-l}l!}= \sum_{l=0}^{2k}\frac{(-2k)_l(\frac{\theta}{2})_l(2k+2\theta-1)_l}{(\theta)_l(\theta)_{l}l!}.
\end{equation}
We didn't know this identity but Dennis Stanton observes that it is a special case of the following transformation
\begin{equation}\label{Tr}
    (a+A)_n~_3F_2(a,c-b,-n;c,a+A|1)=(A)_n~_3F_2(a,b,-n;c,1-A-n|1)
\end{equation}
when $n=2k,a=\frac{\theta}{2},c=\theta,b=2\theta+2k-1,A=1-\theta-2k$. One of his proofs of (\ref{Tr}) proceeds by multiplying the Pfaff transformation (\cite[Page 43]{Koepf})
\begin{equation*}
    (1-x)^{-a}~_2F_1(a,c-b;c|\frac{x}{x-1})=~_2F_1(a,b;c|x)
\end{equation*}
by $(1-x)^{-A}$ and equating coefficients of $x^n$.

\end{remark}

\newpage
\bibliographystyle{spmpsci}
\bibliography{Burnside.bib}

\begin{thebibliography}{10}
\providecommand{\url}[1]{{#1}}
\providecommand{\urlprefix}{URL }
\expandafter\ifx\csname urlstyle\endcsname\relax
  \providecommand{\doi}[1]{DOI~\discretionary{}{}{}#1}\else
  \providecommand{\doi}{DOI~\discretionary{}{}{}\begingroup
  \urlstyle{rm}\Url}\fi

\bibitem{AF}
Aldous, D., Fill, J.: Reversible markov chains and random walks on graphs.
\newblock \url{https://www.stat.berkeley.edu/~aldous/RWG/book.pdf} (2002)

\bibitem{AD}
Andersen, H.C., Diaconis, P.: Hit and run as a unifying device.
\newblock J. Soc. Fr. Stat. \& Rev. Stat. Appl. \textbf{148}(4), 5--28 (2007)

\bibitem{And}
Anderson, W.J.: Continuous-time {M}arkov chains.
\newblock Springer Series in Statistics: Probability and its Applications.
  Springer-Verlag, New York (1991).
\newblock An applications-oriented approach

\bibitem{ABT}
Arratia, R., Barbour, A.D., Tavar\'{e}, S.: Logarithmic combinatorial
  structures: a probabilistic approach.
\newblock EMS Monographs in Mathematics. European Mathematical Society (EMS),
  Z\"{u}rich (2003)

\bibitem{Bailey}
Bailey, W.N.: Generalized hypergeometric series.
\newblock Cambridge Tracts in Mathematics and Mathematical Physics, No. 32.
  Stechert-Hafner, Inc., New York (1964)

\bibitem{BO}
Borodin, A., Olshanski, G.: Representations of the infinite symmetric group,
  \emph{Cambridge Studies in Advanced Mathematics}, vol. 160.
\newblock Cambridge University Press, Cambridge (2017)

\bibitem{BW}
Bryc, W., Weso{\l}owski, J.: Askey-{W}ilson polynomials, quadratic harnesses
  and martingales.
\newblock Ann. Probab. \textbf{38}(3), 1221--1262 (2010)

\bibitem{Can}
Cannings, C.: The latent roots of certain {M}arkov chains arising in genetics:
  a new approach. {I}. {H}aploid models.
\newblock Adv. in Appl. Probab. \textbf{6}, 260--290 (1974)

\bibitem{Chen}
Chen, W.K.: Mixing times for {Burnside} processes.
\newblock Master's thesis, National Chiao Tung University (2006)

\bibitem{Chi}
Chihara, T.S.: An introduction to orthogonal polynomials.
\newblock Gordon and Breach Science Publishers, New York-London-Paris (1978).
\newblock Mathematics and its Applications, Vol. 13

\bibitem{Cla}
Clausen, T.: Ueber die {F}\"{a}lle, wenn die {R}eihe von der {F}orm \dots ein
  {Q}uadrat von der {F}orm \dots hat.
\newblock J. Reine Angew. Math. \textbf{3}, 89--91 (1828)

\bibitem{Crane}
Crane, H.: The ubiquitous {E}wens sampling formula.
\newblock Statist. Sci. \textbf{31}(1), 1--19 (2016)

\bibitem{Dia}
Diaconis, P.: Group representations in probability and statistics,
  \emph{Institute of Mathematical Statistics Lecture Notes---Monograph Series},
  vol.~11.
\newblock Institute of Mathematical Statistics, Hayward, CA (1988)

\bibitem{D}
Diaconis, P.: Analysis of a {B}ose-{E}instein {M}arkov chain.
\newblock Ann. Inst. H. Poincar\'{e} Probab. Statist. \textbf{41}(3), 409--418
  (2005)

\bibitem{DiaGri}
Diaconis, P., Griffiths, R.: An introduction to multivariate {K}rawtchouk
  polynomials and their applications.
\newblock J. Statist. Plann. Inference \textbf{154}, 39--53 (2014)

\bibitem{DK}
Diaconis, P., Kemperman, J.: Some new tools for {D}irichlet priors.
\newblock In: Bayesian statistics, 5 ({A}licante, 1994), Oxford Sci. Publ., pp.
  97--106. Oxford Univ. Press, New York (1996)

\bibitem{DKS}
Diaconis, P., Khare, K., Saloff-Coste, L.: Gibbs sampling, exponential families
  and orthogonal polynomials.
\newblock Statist. Sci. \textbf{23}(2), 151--178 (2008).
\newblock With comments and a rejoinder by the authors

\bibitem{DR}
Diaconis, P., Ram, A.: A probabilistic interpretation of the {M}acdonald
  polynomials.
\newblock Ann. Probab. \textbf{40}(5), 1861--1896 (2012)

\bibitem{DS}
Diaconis, P., Shahshahani, M.: Time to reach stationarity in the
  {B}ernoulli-{L}aplace diffusion model.
\newblock SIAM J. Math. Anal. \textbf{18}(1), 208--218 (1987)

\bibitem{DS2}
Diaconis, P., Shahshahani, M.: On the eigenvalues of random matrices.
\newblock J. Appl. Probab. \textbf{31A}, 49--62 (1994).
\newblock Studies in applied probability

\bibitem{DX}
Dunkl, C.F., Xu, Y.: Orthogonal polynomials of several variables,
  \emph{Encyclopedia of Mathematics and its Applications}, vol. 155, second
  edn.
\newblock Cambridge University Press, Cambridge (2014)

\bibitem{G}
Goldberg, L.A.: Automating {P}\'{o}lya theory: the computational complexity of
  the cycle index polynomial.
\newblock Inform. and Comput. \textbf{105}(2), 268--288 (1993)

\bibitem{G2}
Goldberg, L.A.: Computation in permutation groups: counting and randomly
  sampling orbits.
\newblock In: Surveys in combinatorics, 2001 ({S}ussex), \emph{London Math.
  Soc. Lecture Note Ser.}, vol. 288, pp. 109--143. Cambridge University Press,
  Cambridge (2001)

\bibitem{Gor}
Gordon, L.: A stochastic approach to the gamma function.
\newblock Amer. Math. Monthly \textbf{101}(9), 858--865 (1994)

\bibitem{HMV}
Holtzen, S., Millstein, T., Van~den Broeck, G.: Generating and sampling orbits
  for lifted probabilistic inference.
\newblock In: Uncertainty in Artificial Intelligence, pp. 985--994. PMLR (2020)

\bibitem{Ism}
Ismail, M.E.H.: Classical and quantum orthogonal polynomials in one variable,
  \emph{Encyclopedia of Mathematics and its Applications}, vol.~98.
\newblock Cambridge University Press, Cambridge (2005).
\newblock With two chapters by Walter Van Assche, With a foreword by Richard A.
  Askey

\bibitem{J}
Jerrum, M.: Uniform sampling modulo a group of symmetries using {M}arkov chain
  simulation.
\newblock In: Expanding graphs ({P}rinceton, {NJ}, 1992), \emph{DIMACS Ser.
  Discrete Math. Theoret. Comput. Sci.}, vol.~10, pp. 37--47. Amer. Math. Soc.,
  Providence, RI (1993)

\bibitem{J2}
Jerrum, M.: Computational {P}\'{o}lya theory.
\newblock In: Surveys in combinatorics, 1995 ({S}tirling), \emph{London Math.
  Soc. Lecture Note Ser.}, vol. 218, pp. 103--118. Cambridge University Press,
  Cambridge (1995)

\bibitem{KM}
Karlin, S., McGregor, J.L.: The {H}ahn polynomials, formulas and an
  application.
\newblock Scripta Math. \textbf{26}, 33--46 (1961)

\bibitem{KS}
Kemeny, J.G., Snell, J.L.: Finite {M}arkov chains.
\newblock Springer-Verlag, New York-Heidelberg (1976).
\newblock Reprinting of the 1960 original, Undergraduate Texts in Mathematics

\bibitem{KZ}
Khare, K., Zhou, H.: Rates of convergence of some multivariate {M}arkov chains
  with polynomial eigenfunctions.
\newblock Ann. Appl. Probab. \textbf{19}(2), 737--777 (2009)

\bibitem{Kin}
Kingman, J.F.C.: Random discrete distribution.
\newblock J. R. Stat. Soc. Ser. B. Stat. Methodol. \textbf{37}, 1--22 (1975)

\bibitem{Koepf}
Koepf, W.: Hypergeometric summation, second edn.
\newblock Universitext. Springer, London (2014).
\newblock An algorithmic approach to summation and special function identities

\bibitem{Let}
Letac, G.: Donkey walk and {D}irichlet distributions.
\newblock Statist. Probab. Lett. \textbf{57}(1), 17--22 (2002)

\bibitem{LP}
Levin, D.A., Peres, Y.: Markov chains and mixing times.
\newblock American Mathematical Society, Providence, RI (2017).
\newblock With contributions by Elizabeth L. Wilmer, With a chapter on
  ``Coupling from the past'' by James G. Propp and David B. Wilson

\bibitem{M}
Milla, L.: A detailed proof of the {Chudnovsky} formula with means of basic
  complex analysis--{Ein ausf\"uhrlicher Beweis der Chudnovsky-Formel mit
  elementarer Funktionentheorie}.
\newblock arXiv preprint arXiv:1809.00533  (2018)

\bibitem{PR}
P\'{o}lya, G., Read, R.C.: Combinatorial enumeration of groups, graphs, and
  chemical compounds.
\newblock Springer-Verlag, New York (1987).
\newblock P\'{o}lya's contribution translated from the German by Dorothee
  Aeppli

\bibitem{Sal}
Saloff-Coste, L.: Lectures on finite {M}arkov chains.
\newblock In: Lectures on probability theory and statistics ({S}aint-{F}lour,
  1996), \emph{Lecture Notes in Math.}, vol. 1665, pp. 301--413. Springer,
  Berlin (1997)

\bibitem{Set}
Sethuraman, J.: A constructive definition of {D}irichlet priors.
\newblock Statist. Sinica \textbf{4}(2), 639--650 (1994)

\bibitem{SL}
Shepp, L.A., Lloyd, S.P.: Ordered cycle lengths in a random permutation.
\newblock Trans. Amer. Math. Soc. \textbf{121}, 340--357 (1966)

\bibitem{Wil}
Wilcox, R.R.: A review of the beta-binomial model and its extensions.
\newblock J. Educ. Stat. \textbf{6}(1), 3--32 (1981)

\bibitem{Xu}
Xu, Y.: Tight frame with {H}ahn and {K}rawtchouk polynomials of several
  variables.
\newblock SIGMA Symmetry Integrability Geom. Methods Appl. \textbf{10}, Paper
  019, 19 (2014)

\bibitem{Zho2}
Zhong, C.: A {Ewens} deformation of a {Bose-Einstein} {Markov chain}, in
  preparation

\bibitem{Zho1}
Zhong, C.: {PhD} thesis, {Stanford University}, in progress

\end{thebibliography}

\end{document}